\newtheorem{rem}{Remark}
\newtheorem{thm}{Theorem}
\newtheorem{prop}{Proposition}
\newtheorem{cor}{Corollary}
\newtheorem{lem}{Lemma}
\begin{document}

\title{
Locating dominating codes: Bounds and extremal cardinalities
 \thanks{This work was supported in part by grants PAI FQM-305, P06-FQM-01649, Gen. Cat. DGR 2009SGR1040, 2009SGR1387, MTM2011-28800-C02-01, MTM2009-07242. It is also partially supported by the ESF EUROCORES programme EuroGIGA - ComPoSe IP04 - MICINN Project EUI-EURC-2011-4306. }}

\author{
Jos\'e C\'aceres, \thanks{J. C\'aceres and M.L. Puertas are with the Statistics and Applied Math Department in the Universidad de Almer\'{i}a in Almer\'{i}a, Spain (e-mail:\{jcaceres,mpuertas\}@ual.es)}
\and Carmen Hernando, \thanks{C. Hernando, M. Mora and I.M. Pelayo are with  Applied Math I, II and III Departments respectively, in the Universitat Polit\`ecnica de Catalunya in Barcelona, Spain (e-mail:\{carmen.hernando,merce.mora,ignacio.m.pelayo\}@upc.edu)}
\and Merc\`e Mora,
\and Ignacio M. Pelayo and
\and Mar\'{\i}a Luz Puertas}%

\maketitle

\begin{abstract}
In this work, two types of codes such that they both dominate and locate the vertices of a graph are studied. Those codes might be sets of detectors in a network or processors controlling a system whose set of responses should determine a malfunctioning processor or an intruder. Here, we present our more significant contributions on $\lambda$-codes and $\eta$-codes concerning concerning bounds, extremal values and realization theorems.
\end{abstract}


\begin{IEEEkeywords}
Network problems, Graph theory, codes on graphs, covering codes, locating dominating codes.
\end{IEEEkeywords}

\section{Introduction}
Assume a building floor modeled as a graph. Minimum locating-dominating sets or $\lambda$-codes can be used to determine the exact location of an object in the graph provided that the object cannot occupy the same vertex as the detection device, for instance a fire alarm placed on a wall or the ceiling. Each alarm sends a signal when detecting a fire in any of its adjacent vertices and the activated signals will univocally determine the place of the fire. Thus locating-dominating sets are also covering codes.

Very often, the detection range of the device is not as limited as in the previous example; imagine a surveillance camera instead of a fire alarm. Here the detector gives the distance to the object, say an intruder, and the set of distances unambiguously locates the object. However, in order to prevent failures and maintain the properties of a covering, or dominating, code it will be interesting to ensure that any location under surveillance is next to at least one camera (perhaps for identifying the intruder). Then we have a minimum metric-locating-dominating set or $\eta$-code.

Another application comes from multiprocessor architecture. Here each vertex corresponds to a processor and each edge to a dedicated link between two processors and some processors have the task to check the rest of the system. Clearly, any complete set of outputs should determine a faulty processor. If the processors are only able to check its immediate neighbors then a $\lambda$-code is necessary. On the other hand, if the checking processors have an unlimited range of action within the system then the best covering code that one could use is a $\eta$-code. Whenever the processors should be checked by themselves, another popular class of codes such that identifying codes are necessary~\cite{hokale06,kachale,latr08}. However, the existence of those codes is not guaranteed for any graph, and then a locating-dominating code is the next best alternative. A complete list of continuously updated papers involving different kinds of codes is to be found in~\cite{lobstein}.

The immediate problem here is to determine the minimum number of detectors needed for each code. It is also interesting to know some trade-offs between using $\lambda$-codes and $\eta$-codes. The rest of the paper is organized as follows: In Section~\ref{definiciones}, the main concepts and definitions are presented. In Section~\ref{cotas}, tight bounds of both parameters are given. Those cases with extremal cardinalities of $\lambda$ and $\eta$ are discussed in Section~\ref{valoresextremos}, and in Section~\ref{realizabilidad} several realization theorems for any possible values are provided.

\section{Formal definitions and related work}
\label{definiciones}

All the graphs considered are  finite, undirected, simple, and connected. Given a graph $G=(V,E)$, the \emph{open neighborhood} of a vertex $v\in V$ is $N(v)=\{u\in V | uv\in E\}$ and its \emph{degree} $deg(v)=|N(v)|$. The distance between two vertices $v$ and $w$ is denoted by $d(v,w)$ and the diameter $diam(G)$ is the maximum distance within two vertices of $G$. For undefined basic concepts we refer the reader to introductory graph theoretical literature, e.g.,~\cite{chartrand}.

This work relies on two main concepts such are domination and location. Thus, a set $D\subseteq V$ is \emph{dominating} if for every vertex $v\in V\setminus D$, $N(v)\cap D\neq\emptyset$. The \emph{domination number} $\gamma(G)$ is the minimum cardinality of a dominating set of $G$ and a dominating set of cardinality $\gamma(G)$ is called a \emph{$\gamma$-code} ~\cite{hahesl}. On the other hand, let $S=\{x_1,\ldots,x_k\}$ be a subset of vertices. For any $v\in V\setminus S$, the vector of \emph{metric coordinates} of $v$ with respect to $S$ is the ordered $k$-tuple $c_S(v)=(d(v,x_1),\ldots,d(v,x_k))$. The set $S$  will be \emph{locating} if for every pair of distinct vertices $u,v\in V$,
$c_S(u)\neq c_S(v)$. The \emph{metric dimension} $\beta(G)$ is the minimum cardinality of a locating set of $G$~\cite{hame,slater75}. A locating set of cardinality $\beta(G)$ is called a \emph{metric code}.

Undoubtedly, it is of interest for a code to be dominating and locating, and there exist several ways to define it. For instance, a \emph{metric-locating-dominating set} is directly a dominating and locating set. The \emph{metric-location-domination number} $\eta(G)$ is the minimum cardinality of a metric-locating-dominating set of $G$ and a metric-locating-dominating set of cardinality $\eta(G)$ is called an \emph{$\eta$-code}~\cite{heoe}. A different and more restrictive definition is the following: a set $D\subseteq V$ is a \emph{locating-dominating set} if every two vertices  $u,v\in V(G)\setminus D$ verify that $\emptyset\neq N(u)\cap D\neq N(v)\cap D\neq\emptyset.$ The \emph{location-domination number}  $\lambda(G)$ is the  minimum cardinality of a locating-dominating set.  A locating-dominating set of cardinality $\lambda(G)$ is called a \emph{$\lambda$-code}~\cite{slater87,slater88}.

Certainly, every locating-dominating set is both locating and dominating. However, a set which locates and dominates is not necessarily a locating-dominating set. For example, consider the path $P_6$ with vertices $\{0,..,5\}$. Then the set $D=\{1,4\}$ is both dominating and locating, but it is not a locating-dominating set since  $N(3)\cap D=N(5)\cap D=\{4\}$.

Location and domination are hereditary properties. Particularly, if for two subsets $S_1,S_2\subset V$ the set $S_1$ is locating and $S_2$ is dominating, then $S_1\cup S_2$ is both locating and dominating.

A straightforward consequence of the above definitions follows:

\begin{prop}\label{ineq} For every graph $G$,
$\max\{\gamma(G),\beta(G)\} \leq \eta(G) \leq \min\{\gamma(G)+\beta(G),\lambda(G)\}$
\end{prop}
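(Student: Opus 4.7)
The plan is to split the chain of inequalities into four separate inequalities and verify each by unpacking the relevant definitions.

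First I would handle the two lower bounds $\gamma(G)\le\eta(G)$ and $\beta(G)\le\eta(G)$. By definition an $\eta$-code is simultaneously a dominating set and a locating set of $G$, so its cardinality is at least as large as the minimum cardinality of each type. These two bounds should follow in one line each, with no real work.

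Next I would attack the upper bound $\eta(G)\le\gamma(G)+\beta(G)$ using the hereditary observation already recorded in the paper: if $S_1$ is locating and $S_2$ is dominating, then $S_1\cup S_2$ is both. Taking $S_1$ to be a metric code and $S_2$ to be a $\gamma$-code, the union is a metric-locating-dominating set of cardinality at most $\beta(G)+\gamma(G)$, which gives the desired bound on $\eta(G)$.

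The main obstacle is the remaining inequality $\eta(G)\le\lambda(G)$, i.e.\ showing that every locating-dominating set $D$ is also a metric-locating-dominating set. Domination of $D$ is immediate, so the task is to verify that $D$ is locating, i.e.\ that $c_D(u)\ne c_D(v)$ for every pair of distinct vertices $u,v\in V$. The key observation is that for any $x\in D$, the coordinate $d(u,x)$ equals $1$ precisely when $x\in N(u)\cap D$, so the set $N(u)\cap D$ can be read off from the $1$-entries of $c_D(u)$. I would split into cases: if both $u,v\notin D$, the locating-dominating hypothesis gives $N(u)\cap D\ne N(v)\cap D$, hence some $x\in D$ witnesses $d(u,x)=1$ while $d(v,x)\ge 2$ (or vice versa), so the coordinate vectors differ; if $u\in D$ (and $v\ne u$), the coordinate at position $u$ is $0$ in $c_D(u)$ and nonzero in $c_D(v)$, so again the vectors differ. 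This shows $D$ is metric-locating-dominating, giving $\eta(G)\le|D|=\lambda(G)$ and completing the chain.
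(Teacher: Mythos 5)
Your proof is correct and follows exactly the route the paper intends: the paper states this proposition without a formal proof, calling it a straightforward consequence of the definitions, and the two nontrivial ingredients you use (the union of a locating set and a dominating set is both, and every locating-dominating set is locating and dominating) are precisely the observations recorded in the two paragraphs immediately preceding the proposition. Your case analysis for $\eta(G)\le\lambda(G)$, reading $N(u)\cap D$ off the $1$-entries of $c_D(u)$, is a sound way to fill in the detail the paper leaves implicit.
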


\begin{table}[t]
\begin{center}\begin{tabular}{|c||c|c|c|c|} \hline
$G$                       & $\gamma$   &  $\beta$   &  $\eta$   &  $\lambda$    \\  \hline\hline
$P_n$, $n>3$     &  $\lceil\frac{n}{3}\rceil$     & 1 & $\lceil\frac{n}{3}\rceil$ & $\lceil\frac{2n}{5}\rceil$  \\ \hline
$C_n$, $n>6$    & $\lceil\frac{n}{3}\rceil$  &  2  &  $\lceil\frac{n}{3}\rceil$ &  $\lceil\frac{2n}{5}\rceil$ \\ \hline
$K_n$, $n>1$                     &      1     &    $n-1$   &    $n-1$   &  $n-1$       \\ \hline
$K_{1,n-1}$, $n>2$        &      1     &    $n-2$   &    $n-1$   &  $n-1$        \\ \hline
$K_{r,n-r}$, $1<r\le n-r$ &      2     &    $n-2$   &    $n-2$   &  $n-2$      \\ \hline
$W_{1,n-1}$, $n>7$   &  1  &  $\lfloor\frac{2n}{5}\rfloor$ & $\lceil\frac{2n-2}{5}\rceil$ & $\lceil\frac{2n-2}{5}\rceil$  \\ \hline
\end{tabular}\end{center}
\caption{Domination and location parameters of some basic families.}
\label{tabla}       
\end{table}

In the rest of this paper,  $P_n$, $C_n$ and $K_n$ denote the path, cycle and complete graph of order $n$, respectively. In all cases, unless otherwise stated, the set of vertices is $\{0,1,\cdots,n-1\}$. In addition, $K_{p,n-p}$ and $W_{1,n-1}$  denote the complete bipartite graph (being its smallest stable set of order $p$)  and the wheel of order $n$. Check the values of the domination and location parameters for those families of graphs in Table~\ref{tabla}.

Finally, the strong grid $P_n \boxtimes P_m $ has as vertices the pairs of integers $(i,j)$ such that $0\leq i\leq n-1, 0\leq j\leq m-1$, and two vertices $(i,j)$ and $(i',j')$ are adjacent when $|i-i'|\leq 1$ and $|j-j'|\leq 1$. This operation can be iterated to obtain the $k$-dimensional strong grid $P_{n_1}\boxtimes \dots \boxtimes P_{n_k}$ \cite{hik11}.

\section{Bounds}\label{cotas}

In this section we will bound the values of $\eta$ and $\lambda$. These bounds are given in terms of the order $n$ and the diameter $D$ of the graph as it is usual for locating and dominating parameters. Similar studies can be found in~\cite{exlara07} for identifying codes, and in~\cite{chahulo06} when the action range of a locating-dominating code is $r>1$.

\begin{thm}\label{boundseta} Let $G$ be a
graph such that $|V(G)|=n$, $diam(G)=D\ge3$ and $\eta(G)=\eta$. Then
$\eta + \lceil \frac{2D}{3}\rceil \le  n \le  \eta + \eta \cdot 3^{\eta-1}$, and both bounds are tight.
\end{thm}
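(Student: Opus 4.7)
My plan is to prove the two inequalities separately: an injective encoding of $V\setminus S$ yields the upper bound, while an explicit small metric-locating-dominating set yields the lower bound.

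For the upper bound $n\le \eta+\eta\cdot 3^{\eta-1}$, I fix any $\eta$-code $S=\{s_1,\dots,s_\eta\}$ and build an injection $\phi:V\setminus S\to\{1,\dots,\eta\}\times\{-1,0,1\}^{\eta-1}$. Given $v\in V\setminus S$, the domination property furnishes some $s_i\in S$ with $d(v,s_i)=1$; let $i(v)$ be the smallest such index. For every $j\ne i(v)$ the triangle inequality applied to the triple $v,s_{i(v)},s_j$ gives $|d(v,s_j)-d(s_{i(v)},s_j)|\le 1$, so $t_j(v):=d(v,s_j)-d(s_{i(v)},s_j)\in\{-1,0,1\}$. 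Set $\phi(v)=(i(v),(t_j(v))_{j\ne i(v)})$. A coincidence $\phi(u)=\phi(v)$ recovers $c_S(u)=c_S(v)$, forcing $u=v$ by the locating property; hence $\phi$ is injective and $|V\setminus S|\le\eta\cdot 3^{\eta-1}$.

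For the lower bound $n\ge\eta+\lceil 2D/3\rceil$, I construct a metric-locating-dominating set $S^*$ of cardinality $n-\lceil 2D/3\rceil$. Take a diametral path $P:v_0,v_1,\dots,v_D$ and let $A\subseteq V(P)$ be a minimum dominating set of $P$ viewed as a copy of $P_{D+1}$, so $|A|=\lceil(D+1)/3\rceil$. Define $S^*:=A\cup(V(G)\setminus V(P))$; its size equals $n-(D+1)+\lceil(D+1)/3\rceil=n-\lceil 2D/3\rceil$. Domination holds because every non-code vertex lies in $V(P)\setminus A$ and has a neighbour in $A$ along a path edge. For location, any two distinct non-code vertices $v_i,v_j$ lie on $P$; since $P$ is a geodesic we have $d(v_i,v_k)=|i-k|$ for every $v_k\in A$, and $|i-k|=|j-k|$ forces $k=(i+j)/2$, a unique value, while $D\ge 3$ makes $|A|\ge 2$, so some $v_k\in A$ separates $v_i$ from $v_j$. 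Therefore $\eta\le|S^*|=n-\lceil 2D/3\rceil$.

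For tightness, $G=P_{D+1}$ realises the lower bound: Table~\ref{tabla} gives $\eta=\lceil(D+1)/3\rceil$, so $n-\eta=\lceil 2D/3\rceil$. For the upper bound I would exhibit a graph whose non-code vertices realise every element of $\{1,\dots,\eta\}\times\{-1,0,1\}^{\eta-1}$ under $\phi$; this can be arranged by attaching to each code vertex the full allotment of $3^{\eta-1}$ distance-one neighbours with prescribed perturbations of the intra-code distances. The step I expect to be most delicate is the locating check in the lower bound construction, but it is handled cleanly by the geodesic identity along $P$, which reduces the argument to the elementary equation $|i-k|=|j-k|$ on integers.
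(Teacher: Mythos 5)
Your proofs of the two inequalities are correct and follow essentially the same route as the paper: the upper bound via the observation that a dominated vertex's distance to each other code vertex can deviate from the intra-code distance by at most $1$ (your injection $\phi$ is just a cleaner packaging of the paper's counting of coordinate vectors), and the lower bound via the set $A\cup(V(G)\setminus V(\mathcal P))$ on a diametral path, with the same arithmetic and the same tightness witness $P_{D+1}$. Your locating check via the geodesic identity $d(v_i,v_k)=|i-k|$ and the equation $|i-k|=|j-k|$ is a correct and slightly more explicit version of what the paper waves through as ``clearly dominating and locating.''

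The genuine gap is the tightness of the upper bound, which is part of the statement. You only assert that a graph realising all of $\{1,\dots,\eta\}\times\{-1,0,1\}^{\eta-1}$ ``can be arranged by attaching to each code vertex the full allotment of $3^{\eta-1}$ distance-one neighbours with prescribed perturbations of the intra-code distances.'' This is exactly the nontrivial step: one must exhibit an actual graph whose metric simultaneously realises all $\eta\cdot 3^{\eta-1}$ prescribed distance vectors consistently (arbitrary prescriptions of pairwise distances need not be realisable by any graph), and verify that the prescribed $\eta$-set is metric-locating-dominating. (Minimality of that set then does follow for free from the upper bound itself, so that part is not an issue.) The paper resolves this concretely by taking the induced subgraph $G_\eta$ of the strong grid $P_5^{\boxtimes\eta}$ on the vertex set $A_0\cup A_1\cup\dots\cup A_\eta$, where $A_0$ consists of the $\eta$ code vertices at pairwise $\ell_\infty$-distance $3$ and each $A_i$ consists of the $3^{\eta-1}$ vertices with $x_i=1$ and $x_j\in\{2,3,4\}$ for $j\ne i$; the $\ell_\infty$ metric of the strong product is what makes all the prescribed distances coexist. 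Without such an explicit construction (or an argument that one exists), the claim that the upper bound is attained is unproved.
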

\begin{proof}
Let $\cal P$ be a diameter joining a diametral pair $u$ and $v$ with vertices $V({\cal P})=\{u=0,1,\ldots,v=D\}$. If  $A=\{1,4,7,\ldots,\min\{D,3\lceil \frac{D+1}{3}\rceil-2\}\}$, then the set $S=\{V(G)\setminus V({\cal P})\}\cup A$ has $n-\lceil \frac{2D}{3} \rceil$ elements and it is clearly dominating and locating. Hence, $\eta \le n-\lceil \frac{2D}{3} \rceil$. Moreover, the lower bound $\eta + \lceil \frac{2D}{3}\rceil$ is tight since $\eta(P_n)=\lceil \frac{n}{3} \rceil$ for every $n>3$.

To prove the upper bound, consider an $\eta$-code $S=\{v_1,\ldots,v_{\eta}\}$ and an arbitrary vertex $u\in V(G)\setminus S$. As $S$ is a dominating set then, for some vertex $v_i\in S$, $d(u,v_i)=1$. Let $v_j\in S$ where $j\neq i$. Clearly $|d(v_i,v_j)-d(u,v_j)|\leq 1$, since $d(u,v_j)\le d(u,v_i)+d(v_i,v_j)=1+d(v_i,v_j)$ and $d(v_i,v_j)\le d(v_i,u)+d(u,v_j)=1+d(u,v_j)$. This means that the cardinality of $\displaystyle \{c_S(v)\}_{v\in V\setminus S}$ is at most $\eta\cdot 3^{\eta-1}$. In other words, $n\leq \eta + \eta \cdot 3^{\eta-1}$.

Finally, we prove that this upper bound is  tight. Let $\eta\ge 2$ be and consider the $\eta$-dimensional strong grid $P_5^{\eta}=P_5\boxtimes\cdots\boxtimes P_5$. Let $G_{\eta}$ denote the induced subgraph of $P_5^{\eta}$, whose vertex set is $V(G_{\eta})=\displaystyle \bigcup_{i=0}^{\eta}A_i$, where:
\begin{itemize}
\item $A_0=\{v_1=(0,3,\ldots,3),\ldots,v_{\eta}=(3,\ldots,3,0)\}$,
\item For every $i\in\{1,\ldots,\eta\}$,\\
$ A_i=\{(x_1,\ldots,x_{\eta}) ~|~ x_i=1,\text{and} ~j\neq i \Rightarrow 2\le x_j \le 4\}$
\end{itemize}
So the order of $G_{\eta}$ is $\eta + \eta \cdot 3^{\eta-1}$. It is easy to check that $A_0$ is an $\eta$-code of this graph.
\end{proof}

In \cite{chahulo07}, it is proved that $n  \le  \lambda (G) +2^{\lambda(G)}-1$ in any graph $G$ of order $n$ and it is a tight bound. In the following result we provide a lower bound which turns out to be also tight.

\begin{thm}\label{boundslambda} Let $G$ be a
graph of order $n$, diameter $D\ge 3$ and $\lambda(G)=\lambda$. Then
$\lambda + \lceil \frac{3D-1}{5} \rceil \le  n  $, and the bound is tight.
\end{thm}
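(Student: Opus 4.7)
The plan is to mimic the construction used in Theorem~\ref{boundseta}: build a locating-dominating set of $G$ by taking every vertex outside a diametral path together with a carefully chosen small subset of the path itself. Let $\mathcal{P}: w_0 w_1 \cdots w_D$ be a diametral path in $G$. A preliminary one-line observation I would record is that $\mathcal{P}$ has no chord, since any edge $w_i w_j$ with $|j-i| \ge 2$ would shortcut $\mathcal{P}$ to a walk of length at most $D-1$ between $w_0$ and $w_D$, contradicting $d(w_0, w_D) = D$. Consequently, $G[V(\mathcal{P})] \cong P_{D+1}$, and for every $A \subseteq V(\mathcal{P})$ and every $x \in V(\mathcal{P})$ we have $N_G(x) \cap A = N_{\mathcal{P}}(x) \cap A$.

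I would then pick $A \subseteq V(\mathcal{P})$ to be a $\lambda$-code of the path $P_{D+1}$, so that $|A| = \lambda(P_{D+1}) = \lceil 2(D+1)/5 \rceil$ by Table~\ref{tabla} (legitimate since $D+1 \ge 4$), set $B = V(G) \setminus V(\mathcal{P})$, and consider $S = A \cup B$. The non-code vertices are exactly those in $V(\mathcal{P}) \setminus A$. Domination is immediate because $A$ dominates $P_{D+1}$: any non-code vertex has an $A$-neighbor inside $\mathcal{P}$, hence inside $S$. For the locating property, take distinct $x, y \in V(\mathcal{P}) \setminus A$; the no-chord observation gives $N_G(x) \cap A = N_{\mathcal{P}}(x) \cap A$ and likewise for $y$, and these two sets differ since $A$ is a $\lambda$-code of $\mathcal{P}$. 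Because $A$ and $B$ are disjoint, distinctness of the $A$-parts forces $N_G(x) \cap S \neq N_G(y) \cap S$, so $S$ is locating-dominating.

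This yields $\lambda(G) \le |S| = \lceil 2(D+1)/5 \rceil + n - (D+1)$, which I would rearrange as
\[
n \ge \lambda(G) + (D+1) - \left\lceil \frac{2(D+1)}{5} \right\rceil = \lambda(G) + \left\lfloor \frac{3(D+1)}{5} \right\rfloor = \lambda(G) + \left\lceil \frac{3D-1}{5} \right\rceil,
\]
where the last step is the identity $\lceil m/5 \rceil = \lfloor (m+4)/5 \rfloor$ applied to $m = 3D-1$. For tightness, the family $P_n$ with $n \ge 4$ does the job: here $D = n-1$ and $\lambda = \lceil 2n/5 \rceil$, and a case analysis modulo $5$ confirms $\lceil 2n/5 \rceil + \lceil (3n-4)/5 \rceil = n$. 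I expect the only slightly delicate step to be the transfer of the locating property from $\mathcal{P}$ back to $G$ via the no-chord remark; everything else is a mechanical combination of the known value of $\lambda(P_{D+1})$ with that arithmetic identity.
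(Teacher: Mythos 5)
Your proof is correct and follows essentially the same route as the paper: take a diametral path $\mathcal{P}$, form $S=(V(G)\setminus V(\mathcal{P}))\cup A$ with $A$ a locating-dominating set of $\mathcal{P}$ of size $\lceil 2(D+1)/5\rceil$, and use $\lambda(P_n)=\lceil 2n/5\rceil$ for tightness. The only difference is cosmetic: the paper writes out $A$ explicitly via $D=5h+k$, while you invoke the known value of $\lambda(P_{D+1})$ and supply the chordless-path observation that the paper leaves as ``easy to check.''
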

\begin{proof}
Let $u,v\in V(G)$ two diametral vertices and let $\cal P$ be the diameter joining them. If $V({\cal P})=\{u=0,1,\ldots,v=D\}$ and $D=5h+k$ with $0\le k\le4$, then it is easy to check that, for $0\le k\le1$ (resp. $2\le k\le4$), the set $A=\{1,3,\ldots,5h-4,5h-2,D\}$ (resp. $A=\{1,3,\ldots,5h-4,5h-2,5h+1,D\}$) has $\lceil \frac{2D+2}{5} \rceil$ elements and it is a $\lambda$-set of $\cal P$. In other words,
the set $S=\{V(G)\setminus V({\cal P})\}\cup A$ has $n-\lceil \frac{3D-1}{5} \rceil$ elements and it  is a locating-dominating set of $G$. Hence, $\lambda(G) \le n-\lceil \frac{3D-1}{5} \rceil$. Moreover, the lower bound $\lambda(G) + \lceil \frac{3D-1}{5}\rceil$ is tight since, for every $n>3$, $\lambda(P_n)=\lceil \frac{2n}{5} \rceil$.

\end{proof}

An interesting case occurs when the graph is a tree $T$ of order $n$ (see~\cite{ldic_trees,blchloma11}).  A vertex of degree 1 is called a \emph{leaf}, a vertex adjacent to a leaf is a \emph{support vertex}, and if a vertex is adjacent to, at least, two leaves then it is called a \emph{strong support vertex}. The number of leaves and support vertices are denoted $l(T)$ and $s(T)$ respectively.

In \cite{heoe}, it was proved that there is no constant $k$ such that $\lambda(G)\le k \eta(G)$, for every graph $G$. However, it is also showed that $\lambda(T)\le 2 \eta(T)$ for every tree $T$ and that $\eta(T)=\gamma(T)+l(T)-s(T)$. Going a step further, we obtain the following result which turns out to give tight bounds.

\begin{thm}\label{treesetalambda} Let $T$ be a
tree of order at least 3, different from $P_6$ such that $\eta(T)=\eta$ and $\lambda(T)=\lambda$. Then
$\eta \le \lambda \le 2\eta-2$, and both bounds are tight.
\end{thm}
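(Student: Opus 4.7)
The inequality $\eta\le\lambda$ is included in Proposition~\ref{ineq}.

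For the upper bound $\lambda\le 2\eta-2$, the natural starting point is the known inequality $\lambda(T)\le 2\eta(T)$ of~\cite{heoe}. I would re-examine its proof via the following extension strategy. Given an $\eta$-code $S$ of $T$, for each $s\in S$ put
\[
L_s=\{v\in V(T)\setminus S \mid N(v)\cap S=\{s\}\}.
\]
Since two vertices of a tree cannot share two common neighbours (such a pair would close a $4$-cycle), two distinct vertices of $V\setminus S$ have the same $S$-neighbourhood if and only if they both lie in some $L_s$. Hence $S$ itself is locating-dominating precisely when every $L_s$ has at most one element. Moreover two distinct vertices of $L_s$ are mutually non-adjacent (otherwise they would close a triangle with $s$), so adding $|L_s|-1$ arbitrary elements of each such $L_s$ to $S$ resolves the conflicts without creating new collisions (neighbourhoods only grow when vertices are added to $D$). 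This produces a locating-dominating set $D$ of size
\[
|D|=\eta+\sum_{s\in S}(|L_s|-1)_+\le 2\eta.
\]

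To sharpen this by $2$, the plan is to optimise the choice of $\eta$-code and of the distinguishers. Using the identity $\eta(T)=\gamma(T)+l(T)-s(T)$ from~\cite{heoe}, a canonical $\eta$-code splits into a $\gamma$-skeleton together with $l(v)-1$ ``forced'' leaves at every support vertex $v$; the forced leaves automatically satisfy $L_s=\emptyset$ and contribute nothing to the sum. Hence the conflicts only arise at skeleton vertices. I would argue by a case analysis on the local structure at each $s\in S$ (end of a pendant path, interior of such a path, branch vertex, or support) that one can always rearrange $S$ so that at least two skeleton elements have $L_s=\emptyset$, yielding the saving of two. The prototypical device is, on a pendant path of length at least $5$, to replace the $\eta$-code pattern $\ldots,001,001,\ldots$ by the $\lambda$-code pattern $\ldots,10010,\ldots$, which uses the same number of vertices but leaves no conflict. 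Every tree except $P_6$ should admit such a rearrangement; for $P_6$ itself the unique $\eta$-code $\{1,4\}$ produces $|L_1|=|L_4|=2$ with no admissible swap, reflecting the strict inequality $\lambda(P_6)=3>2\eta(P_6)-2=2$.

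The main obstacle is exactly this case analysis: verifying that a pattern swap (or a suitable analogue) always succeeds outside of the $P_6$-configuration, and handling the interaction of domination and locating near branch vertices and near strong supports, where neither the pendant-path trick nor the naive $L_s$-extension is directly applicable. For the tightness, the star $K_{1,n-1}$ with $n\ge 3$ satisfies $\eta=\lambda=n-1$, realising the lower bound, while the spider $T_r$ obtained by attaching $r\ge 2$ paths on $4$ vertices to a common central vertex satisfies $\eta(T_r)=r+1$ and $\lambda(T_r)=2r=2\eta(T_r)-2$; the lower bound $\lambda(T_r)\ge 2r$ follows from a per-leg analysis showing that any locating-dominating set of $T_r$ must contain at least $2$ vertices inside each of the $r$ disjoint legs.
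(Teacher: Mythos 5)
There is a genuine gap, in fact two. First, your displayed bound $|D|=\eta+\sum_{s\in S}(|L_s|-1)_+\le 2\eta$ is false for a general $\eta$-code of a tree, because $|L_s|$ need not be at most $2$: in the spider $S_{k,4}$ (legs $x\hbox{--}a_r\hbox{--}b_r\hbox{--}c_r\hbox{--}d_r$) the $\eta$-code $S=\{c_1,\dots,c_k,x\}$ has $L_x=\{a_1,\dots,a_k\}$ and $L_{c_r}=\{b_r,d_r\}$, so the sum equals $(k-1)+k=2k-1>k+1=\eta$ for $k\ge3$, and your extension produces a locating-dominating set of size $3k$, worse even than the known $2\eta$ bound. (The equivalence ``same $S$-neighbourhood iff same $L_s$'' and the ``no new collisions'' claim are fine; it is only the counting that breaks.) Second, and more importantly, the actual content of the theorem --- the improvement from $2\eta$ to $2\eta-2$ --- is present only as a plan: you say you ``would argue by a case analysis'' that the code can be rearranged to save two, and you yourself identify that case analysis (branch vertices, strong supports, interaction of domination and location) as the unresolved obstacle. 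That is precisely where the work lies, so the proof is incomplete.

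For comparison, the paper does not optimize a fixed $\eta$-code at all; it argues by induction on the order $n$. Taking a diametral pair of leaves $x,y$ with $y$ having support vertex $z$ of degree $2$ and next vertex $u$, it deletes a small configuration ($\{u,z,y\}$, $\{z,y\}$, or $\{z,z',y,y'\}$ depending on whether $u$ has degree $2$, is a support vertex, or is a branch vertex), together with a separate case for strong support vertices, and in each case tracks how $\eta$ and $\lambda$ change under the deletion so that the inequality $\lambda\le 2\eta-2$ propagates. Your tightness examples do coincide with the paper's: $K_{1,n-1}$ for the lower bound and the spider $S_{k,4}$ for the upper bound, and your per-leg argument that $\lambda(S_{k,4})\ge 2k$ is correct. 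If you want to pursue your direct route, you must both restrict to a carefully chosen $\eta$-code (so that the collision count is controlled) and actually carry out the local rearrangement argument near branch and strong support vertices; as written, neither is done.
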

\begin{proof} If $T$ is the star $K_{1,n-1}$, then $\eta=\lambda=n-1$. Assume thus that $T$ is a tree of order $n\ge4$ and diameter $D\ge3$, and proceed by induction on $n$. Certainly, the statement is true for every tree of order at most 4. By hypothesis of induction, assume that it is also true for any tree of order less or equal than $n-1$ and let $T$ be a tree of order $n$. We distinguish two cases:

Case 1: There is no strong support vertex in $T$. Let $x,y$ be a diametral pair of leaves, and let $z$ be the support vertex of $y$, which clearly satisfies $deg(z)=2$. Let $u$ be the vertex adjacent to $z$ and different from $y$ in the diameter joining $x$ and $y$. Again, we have three subcases:

\begin{itemize}
\item Suppose that $deg(u)=2$ and consider the tree $T'=T-\{u,z,y\}$. Observe that $\eta(T)=\gamma(T)=\gamma(T')+1=\eta(T')+1$ and  $\lambda(T)\le\lambda(T')+2$. Hence,
$\lambda(T)\le \lambda(T')+2 \le 2\eta(T') -2 +2 = 2(\eta(T)-1)=2\eta(T)-2$.

\item Assume that $u$ is a support vertex of $T$ such that $deg(u)\ge3$ and consider the tree $T'=T-\{z,y\}$. Then $\eta(T)=\eta(T')+1$ and  $\lambda(T)\le\lambda(T')+1$. Hence,  $\lambda(T)\le \lambda(T')+1 \le 2\eta(T') -2 +1 = 2(\eta(T)-1)-1=2\eta(T)-3$.

\item Suppose  that $u$ is not a support vertex of $T$ and its degree is at least 3, which means that there exists a leaf $y'$, different from $y$,  adjacent to a support vertex $z'$ which is adjacent to $u$. We build the tree $T'=T-\{z,z',y,y'\}$. Note that $\eta(T')+1\le\eta(T)\le\eta(T')+2$ and  $\lambda(T)\le\lambda(T')+2$. Thus,  $\lambda(T)\le\lambda(T')+2 \le 2\eta(T') -2 +2 \le 2(\eta(T)-1)=2\eta(T)-2$.
\end{itemize}

Case 2: $T$ is a tree with at least one strong support vertex $w$. Consider the tree $T'=T-\{y\}$, where  $y$ is a leaf adjacent to $w$. Notice that $\eta(T)=\eta(T')+1$ and  $\lambda(T)=\lambda(T')+1$. Hence,  $\lambda(T)= \lambda(T')+1 \le 2\eta(T') -2 +1 = 2(\eta(T)-1)-1=2\eta(T)-3$.

Those bounds are tight since they are attained in the families of spiders $S_{k,3}$  and $S_{k,4}$ (see Figure~\ref{spiders}). Notice that $\{b_r\}_{r=1}^{k}\cup\{x\}$ is both an $\eta$-code and a $\lambda$-code of $S_{k,3}$, and observe also that $\{c_r\}_{r=1}^{k}\cup\{x\}$ and $\{a_r\}_{r=1}^{k}\cup\{c_r\}_{r=1}^{k}$  are an $\eta$-code and a $\lambda$-code of $S_{k,4}$, respectively.
\end{proof}

\begin{figure}
\begin{center}
\includegraphics[width=0.25\textwidth]{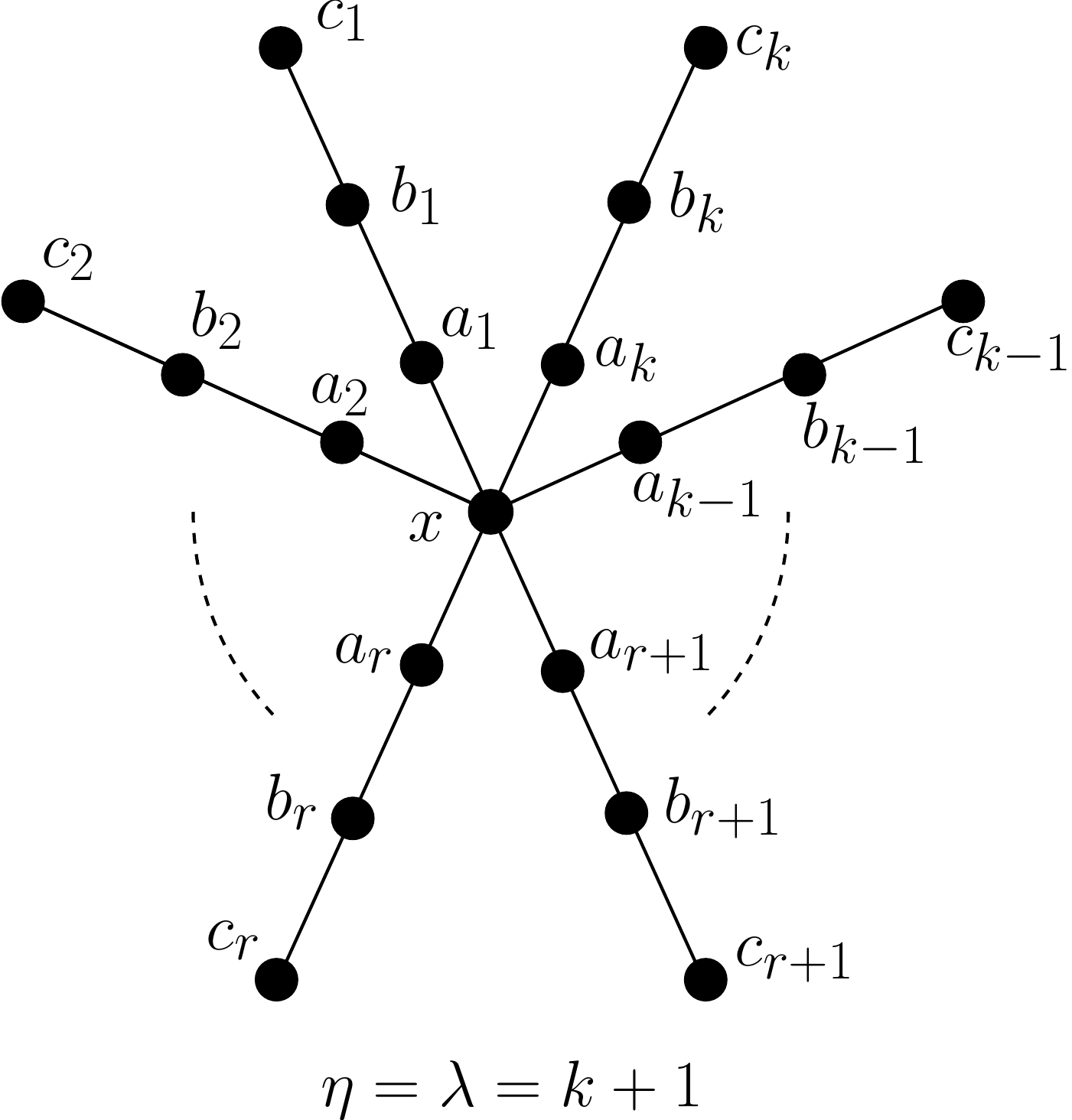}
\vspace{0.5cm}

\includegraphics[width=0.35\textwidth]{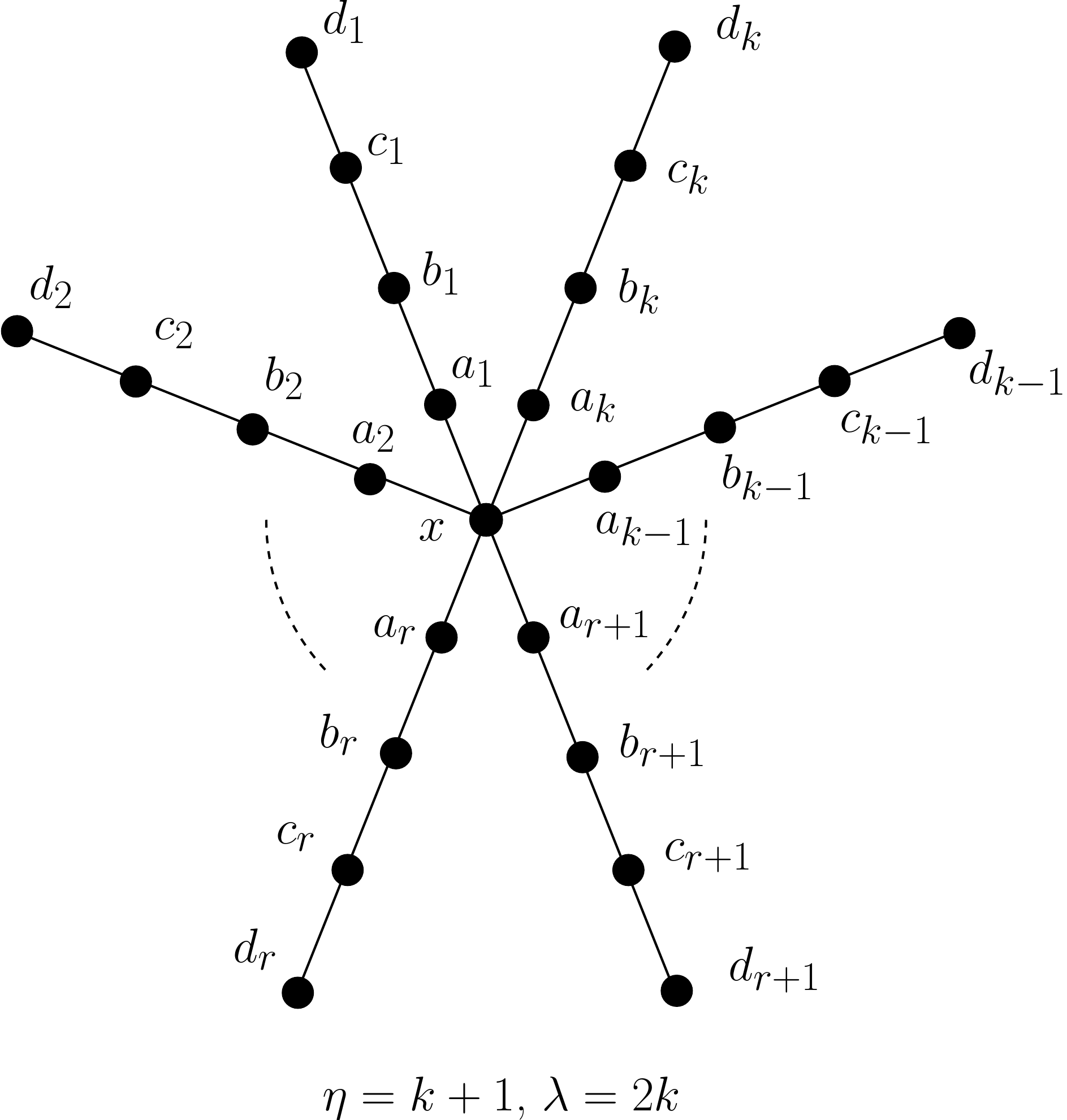}
\end{center}
 \caption{Spiders $S_{k,3}$  and $S_{k,4}$ on $k$ legs, all of them having 3 and 4 edges, respectively.}
\label{spiders}
\end{figure}

\section{Extremal values}\label{valoresextremos}
This section is devoted to establish sufficient conditions over an arbitrary graph $G$ which guarantee some extremal values for $\eta(G)$ and $\lambda(G)$. Graphs with order $n$ and $\eta$ or $\lambda$ equal to $1$ or $n-1$ have been characterized, as well as those graphs with $\eta=n-2$~\cite{heoe}. As a step further, we characterized here all the graphs with $\eta=2$, $\lambda=2$ and $\lambda=n-2$.

To begin with, the next result provides conditions for those graphs $G$ having $\eta(G)=\lambda(G)$.

\begin{prop} Let $G$ be a graph of order $n$, diameter $D$ and metric dimension $\beta$. If either $D=2$ or $\beta\ge n-3$, then $\eta(G)=\lambda(G)$.
\end{prop}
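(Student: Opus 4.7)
By Proposition~\ref{ineq} we already have $\eta(G)\le\lambda(G)$, so the plan is to show the reverse inequality under either hypothesis. In both cases my strategy is to prove the stronger statement that every $\eta$-code $S$ is in fact a locating-dominating set; since such an $S$ already dominates, this amounts to ruling out the existence of distinct $u,v\in V\setminus S$ with $N(u)\cap S = N(v)\cap S$.

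The case $D=2$ is essentially immediate. Every nonzero distance from a vertex outside $S$ to a vertex of $S$ lies in $\{1,2\}$, with $d(u,x)=1$ if and only if $x\in N(u)$. Hence the equality $N(u)\cap S = N(v)\cap S$ would force $c_S(u)=c_S(v)$, contradicting that $S$ is locating.

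For the case $\beta\ge n-3$, any $\eta$-code satisfies $|V\setminus S|\le 3$, and I would split on the exact size. The cases $|V\setminus S|\le 1$ are trivial. The case $|V\setminus S|=2$ follows from a short shortest-path argument: writing $V\setminus S=\{u,v\}$ and assuming $N(u)\cap S=N(v)\cap S$, by the locating property of $S$ one can pick $x\in S$ with $d(v,x)<d(u,x)$; then the first step of any shortest $v$-to-$x$ path lies either in $N(v)\cap S = N(u)\cap S$ or equals $u$, and in either case the triangle inequality forces $d(u,x)\le d(v,x)$, a contradiction.

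The main obstacle is the sub-case $|V\setminus S|=3$, say $V\setminus S=\{u,v,w\}$, since the extra outside vertex $w$ provides a new routing option that breaks the previous argument. Still assuming $N(u)\cap S=N(v)\cap S$ and picking $x_0\in S$ with $d(v,x_0)<d(u,x_0)$, the same first-step analysis now forces the edge $vw$ to be present and the next vertex $y_2$ along the shortest $v$-to-$x_0$ path to satisfy $y_2\in N(w)\cap S$ but $y_2\notin N(u)$; a further triangle-inequality check forces $w\notin N(u)$ as well. From here I would consider the swap $S'=(S\setminus\{y_2\})\cup\{w\}$, which has the same cardinality as $S$, and verify case by case that $S'$ is a locating-dominating set, except possibly in the degenerate situation where $u$, $v$, and $y_2$ all share the same neighborhood in $S$. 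That remaining situation would be dispatched either by a second swap (for instance replacing $y_2$ by $u$ instead of $w$) or by applying the locating property of $S$ to the pair $\{u,y_2\}$ together with the constraint $\beta=n-3$. The bookkeeping through these sub-sub-cases is where I expect the technical difficulty to concentrate.
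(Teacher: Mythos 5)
Your $D=2$ argument is exactly the paper's. For $\beta\ge n-3$, however, you take a genuinely different route: the paper invokes the inequality $\beta+D\le n$ and the explicit characterization of all graphs with $\beta\ge n-3$ from \cite{ours2}, reduces to $D\le 3$, and disposes of $D=3$ by inspecting that finite list; you instead observe that any $\eta$-code $S$ is locating, hence $|V\setminus S|\le 3$, and run a self-contained neighborhood/shortest-path analysis. Your version avoids the external classification entirely (and in particular does not depend on the correctness of that list), at the price of the case bookkeeping you describe; the paper's version is shorter but leans on outside machinery. Your deductions in the $|V\setminus S|\le 2$ cases and the structural facts in the $|V\setminus S|=3$ case ($vw\in E$, $uw\notin E$, $y_2\in N(w)\cap S$ with $y_2\notin N(u)\cup N(v)$) are all correct, and the swap $S'=(S\setminus\{y_2\})\cup\{w\}$ does yield a locating-dominating set whenever $N(y_2)\cap S\ne N$, where $N:=N(u)\cap S=N(v)\cap S$.

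The one genuine gap is the ``degenerate situation'' $N(y_2)\cap S=N$, which you leave to an unspecified second swap or to ``the locating property of $S$ applied to $\{u,y_2\}$''; the latter is a non-starter, since $y_2\in S$ and the locating condition says nothing useful about a pair containing a code vertex. Fortunately the degenerate case is simply vacuous, by pushing your own path argument one step further: if $y_2\ne x_0$, the next vertex $y_3$ on the shortest $v$--$x_0$ path is a neighbor of $y_2$, so either $y_3\in N(y_2)\cap S=N\subseteq N(u)$ or $y_3=u$, and either way $d(u,x_0)<d(v,x_0)$, a contradiction; hence $y_2=x_0$ and $d(v,x_0)=2$, but then any $z\in N\subseteq N(u)\cap N(x_0)$ gives $d(u,x_0)\le 2=d(v,x_0)$, again contradicting the choice of $x_0$. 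With that observation inserted, your proof closes, no second swap needed. (A small cosmetic point: your announced ``stronger statement'' that every $\eta$-code is itself locating-dominating is not what you actually prove in the three-vertex case, where you exhibit a different set of the same cardinality; that is all Proposition~\ref{ineq} plus the definition of $\lambda$ requires, but the framing should be adjusted.)
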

\begin{proof} Suppose that $D =2$ and let $S$ be an $\eta$-code of $G$. Since the maximum distance between vertices is 2, the vector of metric coordinates of $u$ with respect to $S$ contains only digits 1 and 2. Thus, $S$ must be also a $\lambda$-code.

On the other hand, suppose $\beta(G)\geq n-3$. In \cite{ours2}, it is proved that $\beta+D\le  n$ and all the graphs having metric dimension $n-3$ are given. Hence, the diameter of $G$ is at most 3. If $D=1$, then $G=K_n$ and thus $\eta(G)=\lambda(G)=n-1$. The case $D=2$ has been proved above and for the case $D=3$ it is straightforward to check that all the graphs provided in~\cite{ours2} satisfies $\eta(G)=\lambda(G)$.
\end{proof}

\begin{rem} \rm This result is tight in the sense that there are graphs having diameter greater than 2 and/or metric dimension less than $n-3$, satisfying  $\eta(G)<\lambda(G)$. For example, path $P_6$  verifies $diam(P_6)=5$, $\beta(P_6)=1$, $\eta(P_6)=2$ and $\lambda(P_6)=3$.
\end{rem}

Next, we characterize the family of graphs satisfying that $1\le\eta=\lambda\le2$. To begin with, it is clear that the unique graph with order $n\geq 2$ and $\eta=1$ is $P_2$, which certainly also satisfies $\lambda=1$. The case $\eta=2$ is mainly solved using the following results.

\begin{lem}\label{lemeta2} Let $G$ be a
graph of order $n$ and $\eta(G)=2$. Then,
\begin{enumerate}
\item[(i)] $3\le n\le 8$.
\item[(ii)] If $S=\{u,v\}$ is an $\eta$-code, then $d(u,v)\le3$.
\item[(iii)] The graph $G$ can be isometrically embedded into the king grid $P_5\boxtimes P_5$.
\end{enumerate}
\end{lem}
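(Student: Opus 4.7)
For part (i) I would reuse the upper-bound counting from the proof of Theorem~\ref{boundseta}, which in fact does not require $D\ge 3$: for any $\eta$-code $S=\{u,v\}$, each $w\in V(G)\setminus S$ has one coordinate equal to $1$ by domination, and the other is pinned to three values by the triangle inequality applied to $u,v,w$. Thus there are at most $2\cdot 3=6$ possible coordinate vectors outside $S$, giving $n\le 2+6=8$. The lower bound $n\ge 3$ is immediate since $\eta(K_1)=0$ and $\eta(P_2)=1$.

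For part (ii), take a shortest $u$-$v$ path $u=w_0,w_1,\dots,w_k=v$; then $d(w_i,u)=i$ and $d(w_i,v)=k-i$ because the path is a geodesic. Since $\{u,v\}$ dominates, each internal $w_i$ must satisfy $\min(i,k-i)=1$; if $k\ge 4$ then $w_2$ has $i=2$ and $k-i\ge 2$, violating this, so $k\le 3$.

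For part (iii) the natural candidate is the coordinate map $\phi(w)=(d(w,u),d(w,v))$. By (ii) and the triangle inequality, every coordinate is at most $1+d(u,v)\le 4$, so $\phi$ lands in $\{0,\dots,4\}^2=V(P_5\boxtimes P_5)$, and it is injective because $\{u,v\}$ locates. The contraction $d_{P_5\boxtimes P_5}(\phi(w),\phi(w'))\le d_G(w,w')$ is just the coordinate-wise triangle inequality, since king distance is the $\ell_\infty$ gap of coordinates. The reverse inequality I would prove by a finite case analysis, splitting on the dominator pattern of $(w,w')$ and on $k=d(u,v)\in\{1,2,3\}$; the total number of cases is small because by (i) there are at most six non-code vertices and, by the proof of (i), each has coordinates from an explicit list of at most six pairs.

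The main obstacle is the reverse inequality in the same-dominator case: two non-adjacent neighbours of $u$ can have $v$-coordinates that differ by only $1$ under $\phi$, which would give king distance $1$ while $d_G(w,w')=2$. To handle this I would not use the coordinate map verbatim, but instead place $u$ and $v$ in the middle row of the grid at king distance $k$ and then insert each non-code vertex into one of the (at most) eight king-neighbours of its dominator, using the locating property to resolve conflicts and exploiting the spare rows of the $5\times 5$ grid to reflect a clashing vertex to king distance $2$ from its partner. Checking that such a placement is always possible is a bounded verification because $n\le 8$ and $d(u,v)\le 3$ leave only a short list of local configurations to examine.
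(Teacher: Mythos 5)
Your arguments for (i) and (ii) coincide with the paper's: part (i) is obtained from the counting in the proof of Theorem~\ref{boundseta} (the paper simply cites that theorem, and your remark that the bound $n\le\eta+\eta\cdot3^{\eta-1}$ does not actually need $D\ge3$ is a fair, minor refinement), and part (ii) is the same one-line observation that a geodesic of length at least $4$ contains a vertex at distance at least $2$ from both $u$ and $v$.

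For (iii) the paper does exactly what you call the ``natural candidate'': it takes $\phi(w)=(d(u,w),d(v,w))$, gets injectivity from the locating property, shows $|d(u,x)-d(u,y)|\le1$ and $|d(v,x)-d(v,y)|\le1$ for adjacent $x,y$, and then declares $\phi$ an isometric embedding. You are right that this only proves $\phi$ is an injective homomorphism, i.e.\ $d_{P_5\boxtimes P_5}(\phi(x),\phi(y))\le d_G(x,y)$, and your worry about the reverse inequality is justified by a concrete example: for $P_4$ with $\eta$-code $\{1,2\}$ the two end-vertices receive coordinates $(1,2)$ and $(2,1)$, which are at king distance $1$, while their distance in $P_4$ is $3$. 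So the coordinate map is not an isometry in general, and the paper's own proof of the literal statement (iii) contains precisely the gap you identified. What the subsequent enumeration theorem actually uses is only the weaker conclusion that $G$ embeds as a subgraph of $P_5\boxtimes P_5$ on an explicitly bounded coordinate set, with every edge of $G$ mapped to an edge of the grid; both you and the paper establish that much. Your proposed repair --- abandoning the coordinate map and hand-placing vertices via a finite case analysis --- is only sketched and never executed, so as written your proposal does not complete a proof of literal isometry either. The clean fix is to weaken ``isometrically embedded'' to ``embedded as a subgraph'' (equivalently, $\phi$ is an injective homomorphism), which is all that the rest of the paper needs; otherwise the bounded verification you describe must actually be carried out.
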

\begin{proof}
(i) These inequalities are obtained as a consequence of Theorem\,\ref{boundseta}, having also in mind that $\eta(K_2)=1$.

(ii) It is enough to realize that if $d(u,v)\ge4$, then $S$ is not a dominating set.

(iii) Let $G$ be a graph with $\eta(G)=2$ and let $S=\{ u,v\}$ be an $\eta$-code of $G$. Since $d(u,v)\le 3$ and every vertex of $G$ is adjacent either  to $u$ or to $v$, we have that $\{(d(u,x), d(v,x)):x\in V(G) \}$ is a subset of $[0,4]\times [0,4]$.
As $S$ is locating, we have $c_S(x)=(d(u,x), d(v,x))\not= (d(u,y), d(v,y))=c_S(y)$ for every pair of distinct vertices $x, y$, so there is an injection from $V(G)$ to $V(P_5\boxtimes P_5)$ simply by identifying every vertex $x$ of $G$ with its metric coordinates $(d(u,x), d(v,x))$ as a vertex in $P_5\boxtimes P_5$. Moreover, if two vertices $x$ and $y$ are adjacent in $G$, then
\begin{center}
$d(u,y)\le d(u,x)+d(x,y)=d(u,x)+1$ and $d(u,x)\le d(u,y)+d(y,x)=d(u,y)+1$,
\end{center}
which means that  $|d(u,x)-d(u,y)|\le 1$. Similarly, we obtain that $|d(v,x)-d(v,y)|\le 1$. Hence, $(d(u,x), d(v,x))$ and $(d(u,y), d(v,y))$ are adjacent in $P_5\boxtimes P_5$, therefore the above injection is an isometric embedding of $G$ in $P_5\boxtimes P_5$.
\end{proof}

\begin{figure}[ht]
  \begin{center}
        \includegraphics[width=0.45\textwidth]{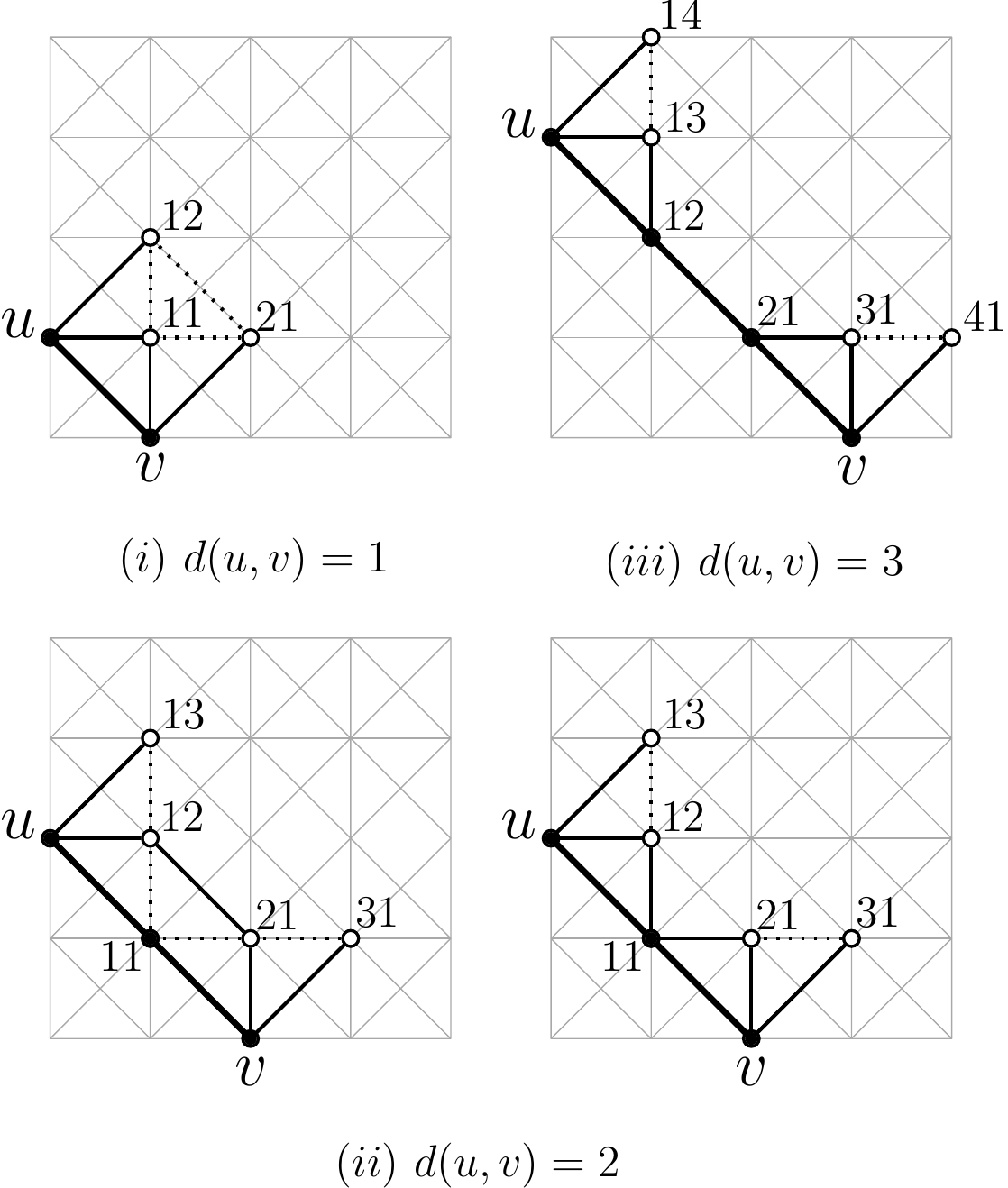}
  \end{center}
  \caption{Graphs verifying $\eta(G)=2$ isometrically embedded in $P_5\boxtimes P_5$. White vertices are optional, provided that the graph has at least three vertices. Discontinuous edges are optional and black vertices are compulsory. }
  \label{figuraEta2P5P5}
\end{figure}

\begin{thm}
There exist 51 non-isomorphic graphs satisfying $\eta(G)=2$ (see Figure~\ref{figuraetados}).
\end{thm}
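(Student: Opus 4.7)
The plan is to use the isometric embedding of Lemma~\ref{lemeta2}(iii) to reduce the classification to a finite case analysis inside $P_5 \boxtimes P_5$. Fix an $\eta$-code $\{u,v\}$ of $G$ and let $d = d(u,v)$. By Lemma~\ref{lemeta2}(ii), $d \in \{1,2,3\}$, so I split into three subcases and identify $u$ with the grid vertex $(0,d)$ and $v$ with $(d,0)$.

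In each subcase, the domination condition forces every other vertex $x$ to satisfy $d(u,x) = 1$ or $d(v,x) = 1$, and the triangle inequality $|d(u,x) - d(v,x)| \le d$ pins down the second coordinate. A routine calculation then gives exactly $3$ candidate positions $\{(1,1),(1,2),(2,1)\}$ when $d=1$, exactly $5$ candidate positions $\{(1,1),(1,2),(1,3),(2,1),(3,1)\}$ when $d=2$, and exactly $6$ candidate positions $\{(1,2),(1,3),(1,4),(2,1),(3,1),(4,1)\}$ when $d=3$. Together with $u$ and $v$ this gives at most $5$, $7$, $8$ vertices respectively, consistent with $n \le 8$ from Lemma~\ref{lemeta2}(i), and it recovers the geometric skeleton of Figure~\ref{figuraEta2P5P5}.

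Next, for each choice of a subset of candidate positions as the vertex set of $G$ with $|V(G)| \ge 3$, I determine which edges among grid-neighbouring vertices are forced and which are optional. An edge is forced whenever its absence would make the true distance to $u$ or $v$ differ from the assigned coordinate (typically all edges joining $u$ or $v$ to a position in row or column $1$); the remaining grid-edges are optional and can be included or omitted independently. For every graph obtained this way, $\{u,v\}$ is automatically dominating by construction and locating because the metric coordinates are the pairwise distinct grid coordinates. Moreover $\eta(G) \ge 2$ holds since a single vertex is an $\eta$-code only in $K_2$, so every such graph genuinely satisfies $\eta(G)=2$.

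Finally, I quotient the resulting list by isomorphism. Within each case there is the involution swapping $u$ and $v$, which is the reflection across the diagonal $x_1 = x_2$ of the grid. Across cases, a single graph can admit $\eta$-codes with different values of $d(u,v)$, so I must identify and remove such duplicates between the three lists. Completing this bookkeeping yields exactly the $51$ graphs depicted in Figure~\ref{figuraetados}. The main obstacle is not the geometric restriction, which is essentially forced by Lemma~\ref{lemeta2}, but this final isomorphism census, which I would carry out by a careful case-by-case enumeration (one subtable per value of $d$) cross-checked with a short computer search to guarantee neither duplication nor omission.
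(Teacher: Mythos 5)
Your proposal follows essentially the same route as the paper's proof: the same case split on $d(u,v)\in\{1,2,3\}$ via Lemma~\ref{lemeta2}, the same candidate coordinate sets and embeddings into the king grids, and the same deferral of the final count of $51$ to an exhaustive finite enumeration up to isomorphism. If anything, you are more explicit than the paper about the forced-versus-optional edge analysis and the cross-case duplicate removal, which the paper compresses into ``an exhaustive inspection of all possibilities.''
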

\begin{proof}
Let $S=\{u,v\}$ be an $\eta$-code of $G$. We label every vertex $w\in V(G)$ with the pair of integers $(d(u,w), d(v,w))$. According to Lemma\,\ref{lemeta2}, $3\le n\le 8$ and $1\le d(u,v)\le3$.
We distinguish three cases, depending on the distance between vertices $u$ and $v$.

{\bf Case 1.}  If $d(u,v)=1$ then $V(G)\setminus S\subseteq \{(1,1),(1,2),(2,1)\}$, and hence $3\le n \le5$. Following a similar reasoning as in the proof of Lemma~\ref{lemeta2}, we obtain that $G$ can be isometrically embedded into the king grid $P_3\boxtimes P_3$, as showed in Figure~\ref{figuraEta2P5P5}(i).

{\bf Case 2.} Suppose $d(u,v)=2$. Then $V(G)\setminus S\subseteq \{(1,1),(1,2),(1,3),(2,1),(3,1)\}$, so $3\le n\le 7$. Again using the injection defined in the proof of Lemma\,\ref{lemeta2}, $G$ can be embedded isometrically into $P_4\boxtimes P_4$ (see Figure~\ref{figuraEta2P5P5}(ii)).

{\bf Case 3.} Finally, if $d(u,v)=3$, now $V(G)\setminus S\subseteq \{(1,2),(1,3),(1,4),(2,1),(3,1),(4,1)\}$ and $4\le n \le 8$. Lemma~\ref{lemeta2} gives us again the isometric embedding of $G$ into $P_5\boxtimes P_5$ (see Figure~\ref{figuraEta2P5P5}(iii)).

An exhaustive inspection of all possibilities proves that the set of non-isomorphic graphs satisfying $\eta(G)=2$ has order 51, showed in Figure~\ref{figuraetados}, and consists of two graphs of order 3, four graphs of order 4, ten graphs of order 5, fifteen graphs of order 6, seventeen graphs of order 7, and three graphs of order 8.
\end{proof}

\begin{figure}[ht]
  \begin{center}
        \includegraphics[width=0.45\textwidth]{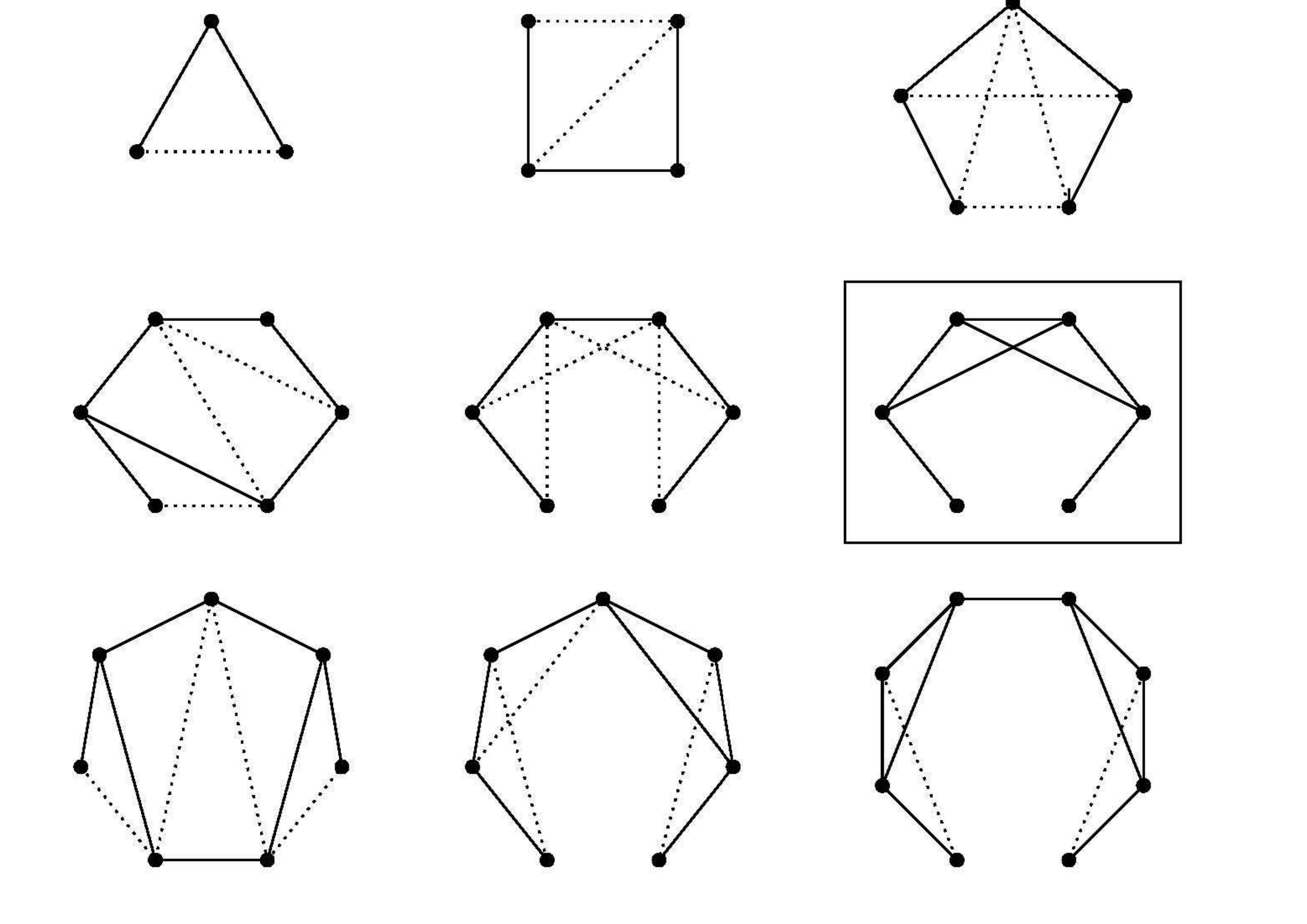}
  \end{center}
  \caption{All graphs satisfying $\eta(G)=2$. Discontinuous edges are optional and the framed graph is forbidden. The three graphs in the first row are all the graphs having $\lambda=2$.}
  \label{figuraetados}
\end{figure}

As a consequence of the previous theorem, it is also possible to obtain a similar list of graphs for $\lambda$.
 \begin{cor}
There are 16 non-isomorphic graphs satisfying $\lambda(G)=2$ (see Figure\,\ref{figuraetados}).
\end{cor}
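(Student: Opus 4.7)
The plan is to piggyback on the classification of the 51 graphs with $\eta(G)=2$ obtained in the preceding theorem. Since every locating-dominating set is in particular a metric-locating-dominating set, Proposition~\ref{ineq} gives $\eta(G)\le\lambda(G)$. Hence $\lambda(G)=2$ forces $\eta(G)\le 2$, and as $\eta(G)=1$ happens only for $G=P_2$ (which has $\lambda=1$), we must have $\eta(G)=2$. So every candidate graph already appears in the list of $51$ graphs produced above.

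Next I would rule out most of those candidates by a purely cardinality argument. Suppose $D=\{u,v\}$ is a locating-dominating set. Then, by definition, the map $x\mapsto N(x)\cap D$ from $V(G)\setminus D$ into the nonempty subsets of $D$ must be injective and avoid $\emptyset$. Since $|D|=2$, there are only three admissible images, namely $\{u\}$, $\{v\}$ and $\{u,v\}$. Consequently $|V(G)\setminus D|\le 3$, i.e.\ $n\le 5$. Combined with the lower bound $n\ge 3$ from Lemma~\ref{lemeta2}(i), the candidate list collapses to the $2+4+10=16$ graphs of orders $3,4,5$ from Figure~\ref{figuraetados}.

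It then remains to show, conversely, that each of these $16$ graphs really does admit a $2$-vertex locating-dominating set. I would go through the three cases of the proof of Theorem~1 (those of order $\le 5$), where the graphs are isometrically embedded into $P_5\boxtimes P_5$ and vertices are coordinatised by $(d(u,x),d(v,x))$. For each graph one reads off directly, from these coordinates together with the (optional) edges, the value of $N(x)\cap\{u,v\}$ for every non-code vertex: namely $\{u\}$ iff the first coordinate is $1$, $\{v\}$ iff the second coordinate is $1$, and $\{u,v\}$ iff both coordinates equal $1$. A short inspection confirms that in each of the $16$ cases these neighbourhoods in $\{u,v\}$ are pairwise distinct and nonempty, so $\{u,v\}$ is a locating-dominating set and $\lambda(G)\le 2$. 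Since trivially $\lambda(G)\ge 2$ (no single vertex can locate all the others unless $G=P_2$), we conclude $\lambda(G)=2$.

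The only genuine obstacle is the last verification step, which is case-by-case and cannot be avoided; fortunately the coordinatisation into $P_5\boxtimes P_5$ inherited from Lemma~\ref{lemeta2}(iii) makes each check routine, because the sole thing to be inspected is whether the three possible neighbourhood patterns $\{u\},\{v\},\{u,v\}$ are realised at most once among the at most three non-code vertices.
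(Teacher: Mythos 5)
Your proposal is correct and follows essentially the same route as the paper: reduce to $\eta(G)=2$ via Proposition~\ref{ineq}, bound the order by $3\le n\le 5$, and then check directly that each of the $16$ surviving graphs in Figure~\ref{figuraetados} admits a two-vertex locating-dominating set. The only (minor, and arguably welcome) difference is that you derive $n\le 5$ from first principles --- injectivity of $x\mapsto N(x)\cap D$ into the three nonempty subsets of a $2$-set --- whereas the paper invokes the bound $n\le\lambda+2^{\lambda}-1$ cited from the literature.
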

\begin{proof}
Let $G$ be a graph of order $n$ satisfying $\lambda(G)=2$. From Proposition\,\ref{ineq} and Theorem\,\ref{boundslambda} it is immediately derived that $3\le n\le 5$ and $\eta(G)=2$, which means that $G$ must be one of the graphs displayed in the first row of Figure\,\ref{figuraetados}. For finishing  the proof, it is enough to check that each of these 16 graphs satisfies $\lambda(G)=2$.
\end{proof}

We end up this section by characterizing  the family of graphs for which $n-2\le\eta(G)=\lambda(G)\le n-1$. In \cite{heoe} (resp. \cite{slater88}) , it was proved that if $G$ is a graph such that  $\eta(G)=n-1$ (resp. $\lambda(G)=n-1$), then $G$ is either the complete graph $K_n$ or the star $K_{1,n}$. Also in \cite{heoe} , all graphs $G$  such that $\eta(G)=n-2$ were completely characterized. As a consequence, all these graphs must also fulfill  that $\lambda(G)=n-2$. Next, we show that these are the unique graphs satisfying the equation $\lambda(G)=n-2$.

\begin{lem}\label{D3} Let $G$ be a graph with diameter $D$, order $n$ and $\lambda(G)\ge n-2$. Then $D\leq3$.\end{lem}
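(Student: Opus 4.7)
The plan is to apply Theorem \ref{boundslambda} directly and argue by contrapositive. Suppose for contradiction that $D\ge 4$. Since the theorem's hypothesis $D\ge 3$ is then satisfied, it yields
\[
\lambda(G)\;\le\; n-\left\lceil\frac{3D-1}{5}\right\rceil.
\]

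Next I would evaluate the ceiling. For $D\ge 4$ one has $3D-1\ge 11$, so $\lceil (3D-1)/5\rceil \ge \lceil 11/5\rceil = 3$. Substituting, $\lambda(G)\le n-3$, which contradicts the hypothesis $\lambda(G)\ge n-2$. Hence $D\le 3$.

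I do not expect any real obstacle: the lemma is essentially a direct corollary of the tight lower bound $\lambda(G)+\lceil(3D-1)/5\rceil\le n$ already proved in Theorem \ref{boundslambda}. The only subtlety to mention is that the theorem assumes $D\ge 3$, but this is harmless here because the conclusion we want is exactly $D\le 3$, so the only case needing the bound is $D\ge 4$, where the hypothesis of Theorem \ref{boundslambda} is automatically met. If one preferred a self-contained argument not appealing to that theorem, one could instead take a diametral path $0,1,\dots,D$ with $D\ge 4$ and exhibit an explicit locating-dominating set of cardinality at most $n-3$ by dominating the first five consecutive path vertices with $\{1,3\}\subseteq S$ and keeping all remaining vertices of $G$ in $S$; but invoking Theorem \ref{boundslambda} is cleaner and shorter.
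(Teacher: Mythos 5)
Your argument is correct: Theorem~\ref{boundslambda} is proved earlier in the paper and does not depend on Lemma~\ref{D3}, so there is no circularity, and for $D\ge 4$ the bound $\lceil (3D-1)/5\rceil\ge\lceil 11/5\rceil=3$ indeed forces $\lambda(G)\le n-3$, contradicting $\lambda(G)\ge n-2$. The paper takes a different, self-contained route: it picks two vertices $u,v$ with $d(u,v)=4$, a shortest path $u,a,w,b,v$ between them, and checks directly that $V(G)\setminus\{u,w,v\}$ is locating-dominating (each of $u,w,v$ is dominated by $a$ or $b$, and $N(u),N(w),N(v)$ meet the set in the pairwise distinct nonempty traces $\{a,\dots\}$, $\{a,b,\dots\}$, $\{b,\dots\}$ since the path is geodesic). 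Your derivation is shorter and reuses machinery already in place, at the cost of importing the full strength of the diametral-path construction from Theorem~\ref{boundslambda}; the paper's version is a three-line ad hoc check that stays elementary and makes the lemma independent of the earlier theorem. The self-contained alternative you sketch at the end (keeping $\{1,3\}$ from the first five path vertices and all other vertices of $G$) is essentially the paper's construction with the same three deleted vertices $0,2,4$, so either write-up would be acceptable.
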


\begin{proof} Suppose that $D\ge4$ and take $u,v\in V(G)$ such that $d(u,v)=4$. If ${\cal P}$ is a shortest path joining $u$ and $v$ such that $V({\cal P})=\{u,a,w,b,v\}$, then it is straightforward to check that the set $V(G)\setminus\{u,w,v\}$ is locating-dominating.
\end{proof}

\begin{thm}\label{lambda=n-2} Let $G$ be a graph of order $n\ge3$. Then, $\lambda(G)=n-2$ if and only if $\eta(G)=n-2$.
\end{thm}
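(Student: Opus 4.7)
The $(\Leftarrow)$ direction is short: Proposition~\ref{ineq} gives $\lambda(G)\ge\eta(G)=n-2$, and $\lambda(G)=n-1$ would force $G\in\{K_n,K_{1,n-1}\}$ by the characterization recalled just before the theorem; but both of these graphs satisfy $\eta=n-1\ne n-2$, so $\lambda(G)=n-2$. Alternatively, one can inspect the characterization of $\eta=n-2$ graphs from \cite{heoe} and verify $\lambda=n-2$ directly, as indicated in the paragraph preceding the theorem.

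For $(\Rightarrow)$, assume $\lambda(G)=n-2$. Proposition~\ref{ineq} already gives $\eta(G)\le n-2$, so the remaining task is to rule out $\eta(G)\le n-3$. My plan is to suppose, toward a contradiction, that $\eta(G)\le n-3$, and then exhibit an explicit locating-dominating set of cardinality at most $n-3$, contradicting $\lambda(G)=n-2$.

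By Lemma~\ref{D3}, $diam(G)\le 3$. If $diam(G)=1$ then $G=K_n$ and $\lambda(G)=n-1\ne n-2$, a contradiction. If $diam(G)=2$, the proposition that opens this section (the case $D=2$) forces $\eta(G)=\lambda(G)=n-2$, again contradicting $\eta(G)\le n-3$. So only $diam(G)=3$ remains.

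In this last and main case, I would fix a diametral pair $u,v$ together with a shortest path $u$-$a$-$b$-$v$ of length $3$, and build a locating-dominating set of size $n-3$ by removing three well-chosen vertices from $V(G)$. The natural candidates are $V(G)\setminus\{u,a,v\}$ and $V(G)\setminus\{u,b,v\}$; the correct choice is dictated by whether $u$ or $v$ is a leaf, since if $u$ is a leaf then $a$ cannot be removed without breaking domination, and symmetrically for $v$ and $b$. When both $u$ and $v$ are leaves neither $a$ nor $b$ is available to drop, and one must instead remove a single vertex off the diametral path; this is possible because $n$ is large enough in that situation. In every subcase domination of the three removed vertices is witnessed by neighbors on the diametral path, and the locating-dominating condition reduces to checking that the traces of the three removed vertices in the remaining set are pairwise distinct and non-empty, which is forced by $diam(G)=3$ (distances live in $\{1,2,3\}$, so any coincidence of traces would yield a forbidden shortcut between $u$ and $v$). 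The main obstacle is the clean organization of these degree subcases so that every diameter-$3$ configuration produces the desired size-$(n-3)$ locating-dominating set.
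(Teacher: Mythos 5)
Your $(\Leftarrow)$ direction and the reduction to $diam(G)=3$ are fine, but the main case contains a genuine gap: the construction you sketch for $D=3$ never actually uses the contradiction hypothesis $\eta(G)\le n-3$, and no such unconditional construction can exist. The double star $K_2(r,s)$ (two adjacent centers carrying $r$ and $s$ pendant leaves) has diameter $3$ and $\lambda=n-2$, so in that graph \emph{no} choice of three vertices can be removed to leave a locating-dominating set; any recipe based only on a diametral path must therefore fail on it. Concretely, your claim that ``any coincidence of traces would yield a forbidden shortcut between $u$ and $v$'' is false: two leaves pendant on the same support vertex have identical traces in any set avoiding both of them, and this creates no shortcut at all. (The double star is not a counterexample to the theorem, since it also has $\eta=n-2$; it is a counterexample to your strategy.)

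The hypothesis $\eta(G)\le n-3$ has to drive the construction, and the paper's proof shows how. It takes a metric-locating-dominating set $S=V(G)\setminus\{x,y,z\}$ of cardinality $n-3$, observes that since $S$ is not locating-dominating one may assume $N(x)\cap S=N(y)\cap S=N\ne\emptyset$ with (say) $yz\in E(G)$ and $xz\notin E(G)$, and then exploits the \emph{metric} locating property of $S$: some $w\in S\setminus N$ satisfies $d(x,w)\ne d(y,w)$, no vertex of $N$ is adjacent to $w$, and Lemma~\ref{D3} forces $d(y,w)=2$, $d(x,w)=3$ with $N(w)\cap N(y)=\{z\}$. Swapping $z$ for $w$ gives $S'=V(G)\setminus\{x,y,w\}$, whose three traces $N(x)\cap S'$, $N(y)\cap S'$, $N(w)\cap S'$ are pairwise distinct and non-empty, so $S'$ is locating-dominating of cardinality $n-3$, contradicting $\lambda(G)=n-2$. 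To salvage your diametral-path approach you would have to explain how the existence of an $(n-3)$-element metric-locating-dominating set rules out configurations like the double star, which is essentially the paper's argument in disguise.
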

\begin{proof} Since $\eta(G)=n-1$ if and only if $\lambda(G)=n-1$  and $\eta(G)\le\lambda(G)$, it is clear that $\eta(G)=n-2$ implies that $\lambda(G)=n-2$.

To prove the converse, assume on the contrary that there exists a graph $G$ with $\lambda(G)=n-2$ and $\eta(G)<n-2$. Let $S=V(G)\setminus \{ x,y,z\}$ be a metric-locating-dominating set of cardinality $n-3$. Since $S$ is not locating-dominating, suppose without loss of generality that $N(x)\cap S=N(y)\cap S=N\not= \emptyset$. However $N(x)\not=N(y)$ because $S$ is locating, that is, either $x$ or $y$ but not both must be adjacent to $z$. Assume hence that $yz\in E(G)$ and $xz\notin E(G)$ (see Figure~\ref{fign-2}(i)).

Since $S$ is a locating set, there exists a vertex $w\in S\setminus N$ such that $d(x,w)\not= d(y,w)$. Notice that no vertex in $N$ is adjacent to $w$, as otherwise $d(x,w)=d(y,w)=2$ (see Figure \ref{fign-2}(ii)). According to Lemma \ref{D3}, $2\leq d(x,w),d(y,w)\leq 3$ and so $d(y,w)=2$ and $d(x,w)=3$, since $N(w)\cap N(x)=\emptyset$. Moreover, it is also followed that $N(w)\cap N(y)=\{z\}$ (see Figure~\ref{fign-2}(ii)).

\begin{figure}[ht]
\includegraphics[width=0.45\textwidth]{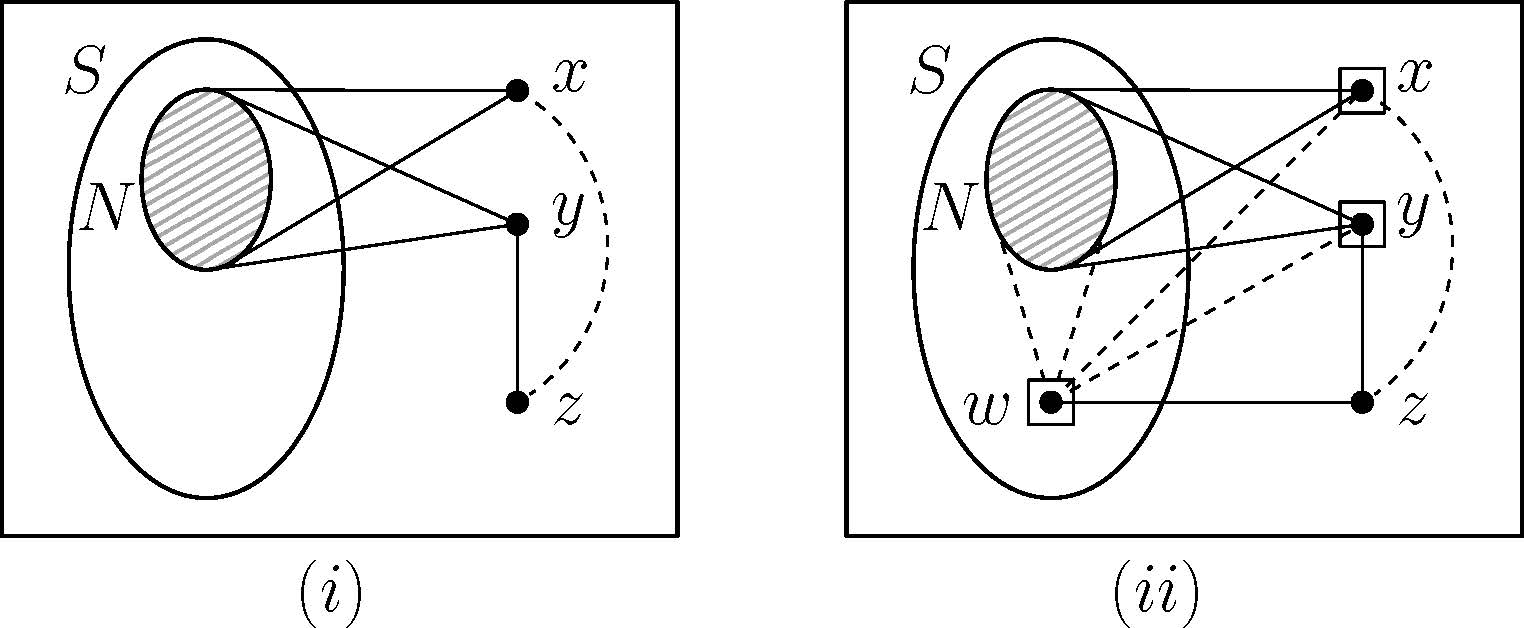}
\caption{Solid edges join  adjacent vertices and dashed edges join non-adjacent vertices.}
\label{fign-2}
\end{figure}

Finally, consider the set $S'=V\setminus \{ x,y,w \}$ and note that
  \begin{itemize}
  \item $N(x)\cap S'=N\not= \emptyset$,
  \item $N(y)\cap S'=N\cup \{ z \}\not= \emptyset$,
 \item $z\in N(w)\cap S'\not= \emptyset$ and $N \nsubseteq N(w)\cap S'$.
 \end{itemize}
In other words, $N(x)\cap S'$, $N(y)\cap S'$ and $N(w)\cap S'$ are pairwise different and non-empty. Therefore, $S'$ is a locating-dominating set of cardinality $n-3$, which leads to a contradiction.
\end{proof}

\begin{rem}\rm As a  consequence of the above result it is immediately concluded that $\eta(G)=n-3$ implies $\lambda(G)=n-3$.  However, the reciprocal is not true. For example, the path $P_6$  verifies $\eta(P_6)=2=n-4$ and $\lambda(P_6)=3=n-3$.
\end{rem}

\begin{rem}\rm As showed in~\cite{heoe}, graph families satisfying $\eta=n-2$ are the following:
\begin{itemize}
\item $K_{r,s}$,  the complete bipartite graph, $r,s\ge 2$,
\item $K_r+\overline{K_s}$, $r,s\ge 2$,
\item $K_1+(K_r\cup \overline{K_s})$, with $r,s\ge 2$,
\item $K_r+(K_1\cup K_s)$, with $r\ge 1$, $s\ge 2$,
\item $K_2(r,s)$, the double star, that is $r$ and $s$ pendant vertices from the two vertices of $K_2$,
\item $K_1+(K_{1,r}\cup  \overline{K_s})$, with $r \ge 2$, $s\ge 1$,
\item Any graph obtained by adding a new vertex adjacent to $s$ leaves of the star $K_{1,r}$, $2\le s\le r-1$.
\end{itemize}
As a consequence of Theorem~\ref{lambda=n-2}, those are also the graphs with $\lambda=n-2$.
\end{rem}
\section{Realization theorems}\label{realizabilidad}

In this section, we characterize when it is possible to construct examples for a variety of values for $\eta$, $\lambda$, $\beta$ and $\gamma$.

\begin{thm}\label{rt1}
Given three positive  integers $a,b, c$ verifying that $\max\{a,b\}\leq c\leq a+b$, there always exists a graph $G$ such that $\gamma(G)=a$,  $\beta(G)=b$ and $\eta(G)=c$, except for the case $1=b<a<c=a+1$.
\end{thm}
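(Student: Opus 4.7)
The plan is straightforward in outline. The inequalities $\max\{a,b\}\le c\le a+b$ are Proposition~\ref{ineq}, so only realizability needs to be proved. I would organize the construction as a case analysis on~$b$, exhibiting an explicit witness graph in each case.

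First I would dispose of the boundary cases. When $b=1$, the hypothesis $\beta(G)=1$ forces $G$ to be a path, and Table~\ref{tabla} records $\gamma(P_n)=\eta(P_n)$ for every $n>3$; thus long paths realize exactly the triples $(a,1,a)$, via $P_n$ with $n\in\{3a-2,3a-1,3a\}$, while the short path $P_3$ supplies the outlier triple $(1,1,2)$ (a direct check shows $\gamma(P_3)=\beta(P_3)=1$ and $\eta(P_3)=2$). This simultaneously justifies the exclusion $1=b<a<c=a+1$: any candidate graph would need to be a long path, which forces $c=\eta=\gamma=a$, contradicting $c=a+1$. When $a=1$, the constraint collapses to $c\in\{b,b+1\}$, and $K_{b+1}$ realizes $(1,b,b)$ while $K_{1,b+1}$ realizes $(1,b,b+1)$ directly from Table~\ref{tabla}.

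The bulk of the work lies in the generic case $a,b\ge 2$. My strategy is to build $G$ by combining a path-like \emph{domination spine} that fixes $\gamma=a$ with pendant \emph{twin-clusters} that fix $\beta=b$ via the standard false-twin lower bound. The geometry of the attachment pattern---the number of clusters, their sizes, and where they are anchored along the spine---provides the degrees of freedom needed to reach every admissible $c$ in $[\max\{a,b\},a+b]$: in the extreme sub-case $c=a+b$ the gadget is designed so that the $\beta$- and $\gamma$-contributions are forced to be disjoint in any metric-locating-dominating code, while $c=\max\{a,b\}$ requires maximal overlap between a locating set and a dominating set, and intermediate values are obtained by interpolating between these templates. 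The main obstacle is the exact calculation of $\eta$: although $\gamma$ and $\beta$ are pinned down by standard domination and twin arguments, the sharp value of $\eta$ requires matching an explicit code (upper bound) against a lower bound that rules out every potential economy between the spine-domination requirement and the twin-locating requirement. This case analysis---differentiating endpoint from interior attachment, single from multiple clusters, and the extremal values of $c$ from the intermediate ones---is where the bulk of the technical work of the proof will lie.
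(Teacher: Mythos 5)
Your treatment of the boundary cases is essentially the paper's: for $b=1$ you correctly reduce to paths via $\beta(G)=1\Leftrightarrow G=P_n$, read off the realizable triples from Table~\ref{tabla}, and this same observation correctly explains why $1=b<a<c=a+1$ is impossible (a path with $\gamma\ge 2$ has $\eta=\gamma$); for $a=1$ the witnesses $K_{b+1}$ and $K_{1,b+1}$ are exactly the ones used in the paper. So far, so good.

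However, for the generic case $a,b\ge 2$ --- which is the actual content of the theorem --- you have written a research plan, not a proof. You describe a ``domination spine'' with pendant ``twin-clusters'' and assert that tuning the number, size and anchoring of the clusters will sweep out every $c$ in $[\max\{a,b\},a+b]$, but you exhibit no concrete graph, no candidate codes, and no verification of any of the three parameters; you yourself flag that the sharp lower bound on $\eta$ (ruling out every economy between the dominating and locating requirements) is the hard part and leave it entirely open. The paper resolves precisely this by splitting into five subcases according to the order relations among $a$, $b$ and $c$ ($a\le b=c$; $a=b<c$; $a<b<c$; $b<a=c$; $b<a<c$), giving for each an explicit graph depending on two or three integer parameters $r,s,l$ chosen so that $r+s+1=a$, $r+s+l=b$, $r+2s+l+1=c$ (and variants), together with explicit $\gamma$-, $\beta$- and $\eta$-codes. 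Your interpolation idea is compatible in spirit with that construction, but until you write down the graphs and check the three invariants --- in particular the matching lower bound for $\eta$ in each subcase --- the main case remains unproved.
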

\begin{proof} We distinguish different cases:

{\bf Case 1.}  Suppose that $b=1$. Certainly, $\beta(G)=1$ if and only if $G$ is a path $P_n$. Moreover, according to Table\,\ref{tabla},  $\gamma(P_2)=\beta(P_2)=\eta(P_2)=1$, $\gamma(P_3)=\beta(P_2)=1<2=\eta(P_2)$ and $\beta(P_{n})=1<\gamma(P_{n})=\eta(P_{n})=\lceil \frac{n}{3}\rceil$ whenever $n\ge4$. Hence, $P_2$ satisfies case $a=b=c=1$, $P_3$ fulfill the case $1=a=b<c=a+b=2$. The case $1=b<a<c=a+1$ is not realizable, and for every $k\ge2$, $P_{3k}$ verifies the case $1=b<a=c=k$.

{\bf Case 2.} Suppose now that $a=1$ and $ b\geq 2$. Notice that if $\gamma(G)=1$, then
 $\beta(G)\leq \eta(G) \le \beta(G)+1$ and moreover there exists a vertex which is adjacent to the rest of vertices of the graph. So the case $1=a<b=c$ is achieved by considering the complete graph $K_{b+1}$, and the case $1=a<b<c=b+1$ is realized with the star $K_{1,b+1}$.

{\bf Case 3.} Finally if $a\geq 2$ and $b\geq 2$ we need to consider several subcases. Recall that in every case
$c\leq a+b$.

{\bf Case 3.1.} When $2\leq a\leq b=c$,
the graph showed in Figure \ref{case3}(i) realizes this case for $r=a-1\geq 1$ and $l=b-a+1\geq 1$. Note that
$\{x_i  \}_{i=1}^r  \cup \{ w \}$ is a $\gamma$-code of cardinality $r+1=a$
and $\{ x_i  \}_{i=1}^r  \cup \{ \alpha_i  \}_{i=1}^l$ is both a $\beta$-code
and an $\eta$-code of cardinality $r+l=b=c$.

{\bf Case 3.2.} If $2\leq a=b<c$, then the graph displayed in Figure \ref{case3}(ii) does the work by taking $r=2a-c\geq 0$ and $s=c-a\geq 1$. It is straightforward  to prove that
$\{x_i  \}_{i=1}^r  \cup \{v_i  \}_{i=1}^s$ is a $\gamma$-code of cardinality $r+s=a$,
$\{x_i  \}_{i=1}^r  \cup \{z_i  \}_{i=1}^s$ is a $\beta$-code of cardinality $r+s=a=b$,
and $\{x_i  \}_{i=1}^r  \cup \{z_i  \}_{i=1}^s   \cup \{v_i  \}_{i=1}^s$ is an $\eta$-code of cardinality $r+2s=c$.

{\bf Case 3.3.} Let $2\leq a < b < c $.
Consider the graph displayed in Figure \ref{case3}(iii) and take $r=a+b-c\geq 0$, $s=c-b-1\geq 0$ and $l=b-a+1\geq 2$.
Notice that $r$ and $s$ are not both $0$, otherwise
$c=a+b=b+1$ implying that $a=1$, which is a contradiction.
Therefore,
$\{ x_i  \}_{i=1}^r  \cup \{ v_i  \}_{i=1}^s \cup \{ w \} $ is a
$\gamma$-code  of cardinality $r+s+1=a$,
$\{ x_i  \}_{i=1}^r  \cup \{ z_i  \}_{i=1}^s \cup \{ \alpha_i  \}_{i=1}^l$ is a $\beta$-code
of cardinality $r+s+l=b$,
and
$\{ x_i  \}_{i=1}^r  \cup \{ z_i  \}_{i=1}^s  \cup \{ \alpha_i  \}_{i=1}^l \cup \{ v_i  \}_{i=1}^s \cup \{ w \}$
is an $\eta$-code  of cardinality  $r+2s+l+1=c$.

{\bf Case 3.4.} For the case $2\leq b<a=c$, consider the graph in Figure \ref{case3}(iv) and
take $r=b-1\geq 1$ and $l=a-b \geq 1$. Then
$\{ x_i  \}_{i=1}^r  \cup \{ \delta \} $ is a $\beta$-code of cardinality $r+1=b$ and
$\{ x_i  \}_{i=1}^r  \cup \{ w_i  \}_{i=1}^l \cup \{ \delta \} $ is both a $\gamma$-code and an $\eta$-code of cardinality $r+l+1=a=c$.

{\bf Case 3.5.} When $2\leq b<a<c$, consider the graph in Figure \ref{case3}(v) and let
$r=a+b-c\geq 0$, $s=c-a-1\geq 0$ and $l=a-b+1\geq 2$. Notice that $r$ and $s$ are not both $0$, otherwise $c=a+1=a+b$ implying $b=1$, which is a contradiction. Then
$\{ x_i  \}_{i=1}^r  \cup \{ v_i  \}_{i=1}^s \cup \{ w_i \}_{i=1}^l $
 is a $\gamma$-code of cardinality $r+s+l=a$,
$\{ x_i  \}_{i=1}^r  \cup \{ z_i  \}_{i=1}^s \cup \{ \delta \}$
 is a $\beta$-code  of cardinality $r+s+1=b$ and
$\{ x_i  \}_{i=1}^r  \cup \{ z_i  \}_{i=1}^s  \cup \{ \delta \}  \cup \{ v_i  \}_{i=1}^s \cup \{ w_i \}_{i=1}^l $
 is an $\eta$-code of cardinality $r+2s+l+1=c$.
\end{proof}

\begin{figure}[ht]
  \begin{center}
        \includegraphics[width=0.45\textwidth]{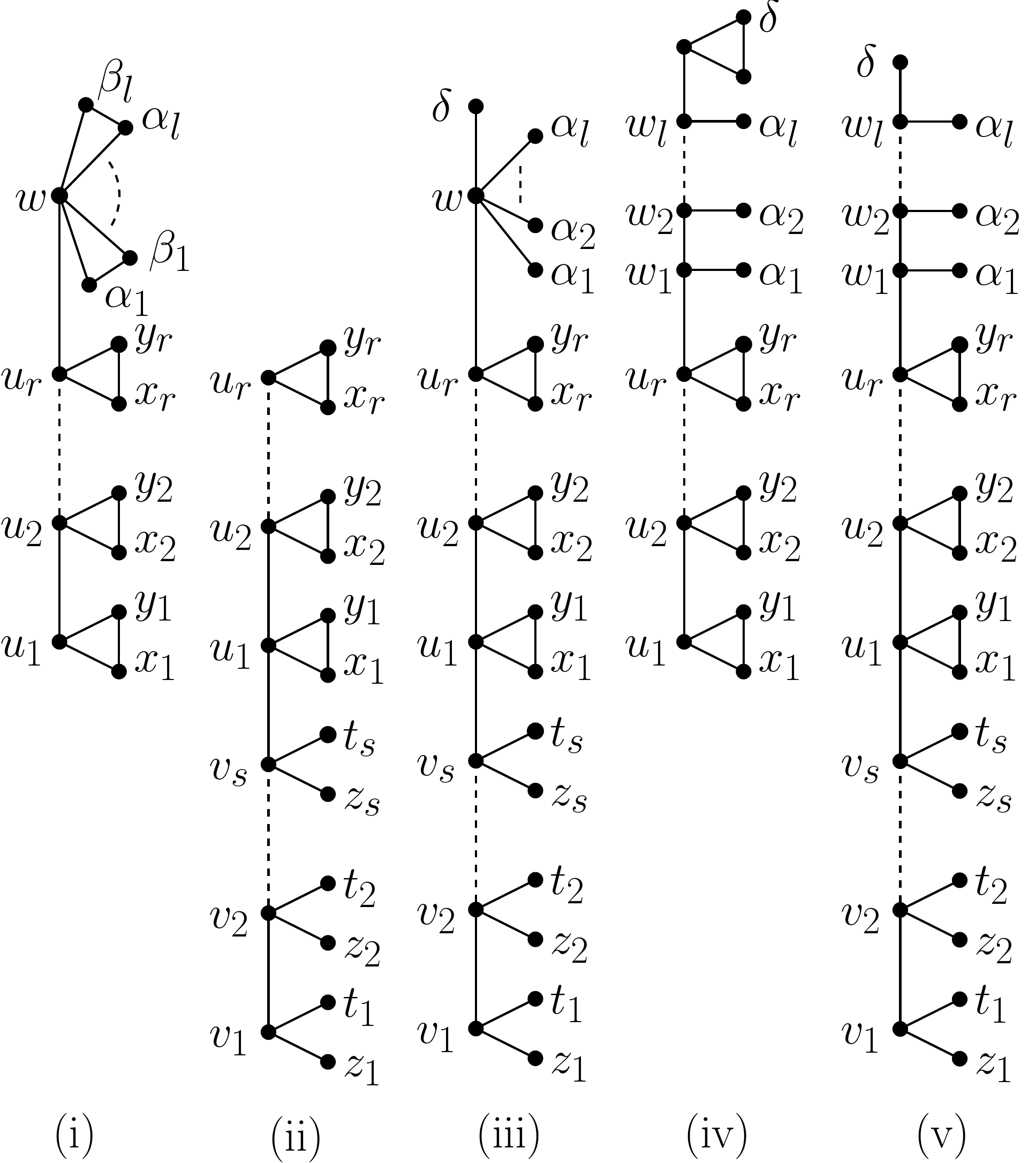}
 \caption{\label{case3} Cases $a\ge2$, $b\ge2$. }
  \end{center}

\end{figure}

\begin{figure}[ht]
  \begin{center}
        \includegraphics[width=.30\textwidth]{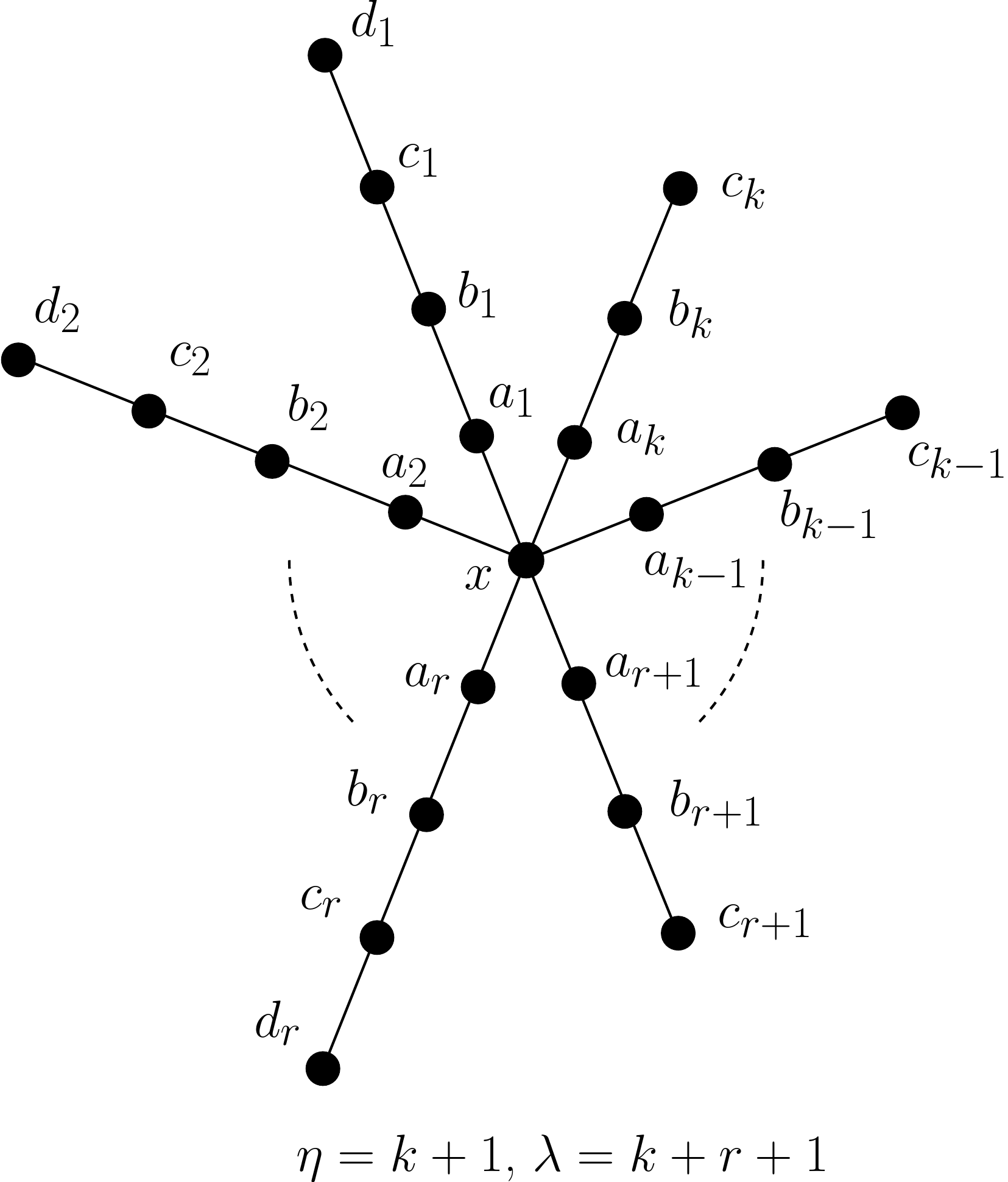}
  \end{center}
  \caption{Spider $S_{r,4,k-r,3}$ on $k$ legs, $r$ of them having 4 edges, and the rest 3 edges.}
  \label{spider1}
\end{figure}

Moreover, in the special case of trees, we can obtain the following result.

\begin{thm}\label{rt2}
Given two  integers $a$ and $b$ verifying that $3\le a \le b \le 2a-2$, there always exists a tree $T$ such that $\eta(T)=a$   and $\lambda(T)=b$.
\end{thm}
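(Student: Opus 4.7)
The plan is to interpolate between the two extremal spiders from Theorem~\ref{treesetalambda}. Recall that $\eta(S_{k,3})=\lambda(S_{k,3})=k+1$ realizes the lower boundary $b=a$, while $\eta(S_{k,4})=k+1$ and $\lambda(S_{k,4})=2k$ realize the upper boundary $b=2a-2$. For intermediate $b$, I would use the mixed spider $T=S_{r,4,k-r,3}$ shown in Figure~\ref{spider1}: $k$ legs glued at a central vertex $x$, of which $r$ are \emph{long} (with successive vertices $a_i,b_i,c_i,d_i$ for $i=1,\dots,r$) and $k-r$ are \emph{short} (with vertices $a'_j,b'_j,c'_j$ for $j=1,\dots,k-r$). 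Setting $k=a-1$ and $r=b-a$ translates the hypothesis $3\le a\le b\le 2a-2$ into $0\le r\le k-1$, so $T$ is well defined.

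To prove $\eta(T)=a$, I would first exhibit $S_\eta=\{x\}\cup\{c_i\}_{i=1}^{r}\cup\{b'_j\}_{j=1}^{k-r}$ of size $a$ and verify that it is a dominating metric-locating set: domination is immediate, and two off-set vertices are separated either by the center (if they lie on different legs) or by the chosen inner vertex of their common leg. For the matching lower bound I would invoke the tree identity $\eta(T)=\gamma(T)+l(T)-s(T)$ from~\cite{heoe}. Every leg contributes one leaf ($d_i$ or $c'_j$) together with a distinct support vertex, so $l(T)=s(T)=k$ and $\eta(T)=\gamma(T)$; a short counting argument, using that each leg needs a dominator for its pendant and the center must be covered, then shows $\gamma(T)=k+1=a$.

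To prove $\lambda(T)=b$, I would take $S_\lambda=\{x\}\cup\{a_i,c_i\}_{i=1}^{r}\cup\{b'_j\}_{j=1}^{k-r}$ of size $k+r+1=b$ and check that the neighbor traces $N(v)\cap S_\lambda$ of the off-set vertices $b_i,d_i,a'_j,c'_j$ are pairwise distinct and nonempty. For the matching lower bound, given any locating-dominating set $S$, I would split on whether $x\in S$. If $x\in S$, enumerating the four single-vertex choices on a long leg shows each one either leaves $d_i$ undominated or gives $b_i$ and $d_i$ the same trace, so every long leg contributes at least two vertices to $S$, while a single well-chosen vertex per short leg can suffice; this yields $|S|\ge 1+2r+(k-r)=b$. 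If $x\notin S$, each short leg must now carry at least two vertices in order to separate $a'_j$ from $c'_j$, and long legs still demand two, giving $|S|\ge 2k\ge k+r+1$ thanks to $r\le k-1$. The step I expect to be most delicate is this per-leg enumeration for the $\lambda$ lower bound; the upper-bound verifications reduce to routine neighborhood and distance computations, and the $\eta$ bound follows cleanly from the tree identity of~\cite{heoe}.
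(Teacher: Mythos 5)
Your proposal is correct and takes essentially the same route as the paper: both use the mixed spider $S_{b-a,4,2a-b-1,3}$ (i.e.\ $k=a-1$ legs, $r=b-a$ of them long) with exactly the same $\eta$-code $\{x\}\cup\{c_i\}\cup\{b'_j\}$ and $\lambda$-code $\{x\}\cup\{a_i,c_i\}\cup\{b'_j\}$. The paper merely asserts that these are optimal codes, while you spell out the lower bounds (via $\eta(T)=\gamma(T)+l(T)-s(T)$ and a per-leg count), and these check out apart from the harmless slip that when a long leg contributes only $d_i$ it is $b_i$, not $d_i$, that is left undominated.
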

\begin{proof} Let $r,k$ be integers such that $2\le k$ and $0\le r \le k$. Consider the spider $S_{r,4,k-r,3}$ showed in Figure \ref{spider1}. Notice that $\{c_i\}_{i=1}^{r}\cup\{b_i\}_{i=r+1}^{k}\cup\{x\}$  is an $\eta$-code and  $\{c_i\}_{i=1}^{r}\cup\{a_i\}_{i=1}^{r}\cup\{b_i\}_{i=r+1}^{k}\cup\{x\}$   is a $\lambda$-code of  $S_{r,4,k-r,3}$. Hence, given any two integers $a,b$ such that $3\le a \le b \le 2a-2$, the spider $S_{b-a,4,2a-b-1,3}$ satisfies $\eta=a$  and $\lambda=b$.
\end{proof}



\end{document}